\g@addto@macro{\endabstract}{\@setabstract}
\newcommand{\authorfootnotes}{\renewcommand\thefootnote{\@fnsymbol\c@footnote}}%
\newcommand{\ncmd}{\newcommand}
\newtheorem{theorem}{Theorem}
\newtheorem{lemma}{Lemma}
\newtheorem{proposition}{Proposition}
\theoremstyle{definition}
\newtheorem*{definition}{Definition}
\newtheorem{remark}{Remark}
\ncmd{\E}{\mathbb{E}}
\ncmd{\Oc}{\mathbb{O}}
\ncmd{\Ha}{\mathbb{H}}
\ncmd{\R}{\mathbb{R}}
\ncmd{\C}{\mathbb{C}}
\ncmd{\Z}{\mathbb{Z}}
\ncmd{\N}{\mathbb{N}}
\ncmd{\Sph}{\mathbb{S}}
\ncmd{\T}{\mathbb{T}}
\ncmd{\D}{\mathbb{D}}
\ncmd{\re}{\mathrm{Re}}
\ncmd{\im}{\mathrm{Im}}
\ncmd{\sing}{\mathrm{sing}}
\ncmd{\reg}{\mathrm{reg}}
\ncmd{\red}{\mathrm{red}}
\ncmd{\bs}{\backslash}
\ncmd{\ov}{\overline}
\ncmd{\noi}{\noindent}
\ncmd{\di}{\displaystyle}
\ncmd{\ra}{\rightarrow}
\ncmd{\lra}{\longrightarrow}
\title{Epicycles in the hyperbolic sky}
\author[1]{Olga Romaskevich \\email olga@pa-ro.net, olga.romaskevich@univ-rennes1.fr\\Univ Rennes, UR1, CNRS, IRMAR - UMR 6625, F-35000 Rennes, France}
\date{}
\begin{document}
\maketitle

\emph{In memory of my grandfather \\
			V.V. Beletskii, \\
      a mathematician and a poet}
\begin{center}
\textbf{Abstract}
\end{center}
\small
Consider a swiveling arm on an oriented complete riemannian surface composed of three geodesic intervals, attached one to another in a chain. Each interval of the arm rotates with constant angular velocity around its extremity contributing to a common motion of the arm. Does the extremity of such a chain have an asymptotic velocity ? This question for the motion in the euclidian plane, formulated by J.-L. Lagrange, was solved by P. Hartman, E. R. Van Kampen, A. Wintner.  We generalize their result to motions on any complete orientable surface of non-zero (and even non-constant) curvature. In particular, we give the answer to Lagrange's question for the movement of a swiveling arm on the hyperbolic plane. The question we study here can be seen as a dream about celestial mechanics on any riemannian surface : how many turns around the Sun a satellite of a planet in the heliocentric epicycle model would make in one billion years ? 
\normalsize

\bigskip

\noindent

\begin{center}
\textbf{Acknowledgments}
\end{center}
I am grateful to Anatoly Stepin for sharing with me the question about the asymptotic angular velocity of a swiveling arm on the hyperbolic plane when I was a student at Moscow State University. I thank \'{E}tienne Ghys for very fruitful discussions that helped me change the approach of this question and drastically simplify the arguments. I also thank Bruno Sevennec as well as the anonymous referee for pertinent questions and remarks that helped me improve the text. The principal part of this work was accomplished when I was a graduate student at the UMPA laboratory at \'{E}cole Normale Sup\'{e}rieure de Lyon. I thank my reporters, François Beguin and Alain Chenciner, for their comments. During the period of the work on this project, I was supported by the LABEX MILYON (ANR-10-LABX-0070) of Universit\'{e} de Lyon, within the program "Investissements d'Avenir" (ANR-11-IDEX-0007) operated by the French National Research Agency (ANR) as well as by a personal grant l'Or\'{e}al-UNESCO for Women in Science 2016. 
\newpage
%

As far as we know, the first models of our planetary system started appearing in the 4th century BC in Greece although the evidence of astronomical observations goes back to the 16th century BC in Babylon. At the end of the 3rd century BC Apollonius of Perga proposed a following geocentric model of the movement. All the planets are following the trajectories which correspond to the sum of two circular movements. First, each of the planets is moving around a corresponding point by forming a circle which is called an \emph{epicycle}. The centers of these epicycles are not fixed but also moving around some point close to the Earth on the bigger circles called \emph{deferents}. Both of these circular movements (of planets on epicycles as well as of the centers of epicycles on deferents) are done with constant angular velocity. This model is called a \emph{geocentric epicycle model} of planetary motion. Of course, one can add the third (fourth, fifth, etc.) set of circles in a similar way in order to obtain the epicycle model for the satellites.

Greeks firmly believed that all movement can be described as a sum of perfect circular movements and the epicycle model of planetary motion is one of the mutliple theories based on that belief. The idea of the decomposition of a movement in a sum of circular ones, one can speculate, finds its place in mathematics much later, in Fourier decomposition of a function into the sum of exponentials with different frequencies, see \cite{Etienne} for more discussion. The epicycle model was improved and largely used by Hipparchus of Rhodes, and, a couple of centuries later, by Ptolemy. 

The problem that we consider here was formulated by Joseph-Louis Lagrange \cite{Lagrange} much later, in the XVIIIth century. He has  also been studying planetary motion but his model was very different from that of Ptolemy since he was working with the force of gravity that Ptolemy had no idea of. Lagrange started with the $N$-body problem: a set of $N$ bodies is moving in the space with respect to gravitational forces that the bodies exercise on each other. This model was first defined by Newton. 

While studying $N$-body problem, Lagrange was interested in  the variation of the longitude of the perihelion for the orbit of a planet in such a system. Surprisingly, the approximation of this variation in this difficult problem boils down to a much simpler problem - the study of the \emph{asymptotic velocity of a satellite in the heliocentric epicycle model}. The heliocentric epicycle model is equivalent to the geocentric model defined above but with the Sun in the center of the system instead of the Earth. The reader will soon see that the problem we study below - the \emph{Lagrange problem} - is nothing more than indeed the study of the asymptotic velocity of a satellite in the heliocentric epicycle model. Ptolemy would have liked it. 

Throughout this paper, the movement of planets will be studied not in the euclidian but in the hyperbolic world. And also, in the approach of this article, the radii of the epicycles are not necessarily considered small with respect to the radius of the deferent. These may seem as one and then another unrealistic assumptions on the movement. As Henri Poincaré writes in the chapter on Astronomy in the \emph{Value of Science} \cite{HP}, \emph{if we are making assumptions, one more assumption won't cost us much.} \footnote{\emph{puisque nous sommes en train de faire des hypothèses, une hypothèse de plus ne nous coûtera pas davantage.}} As far as for utility, Poincaré gives a point of view on astronomy which also, we think, applies to fundamental mathematics.

\smallskip

\emph{Astronomy is useful because it raises us above ourselves; it is useful because it is grand; it is useful because it is beautiful; that is what we should say.}
\footnote{\emph{
L'astronomie est utile, parce qu'elle nous élève au-dessus de nous-mêmes ; elle est utile, parce qu'elle est grande; elle est utile parce qu'elle est belle ; voilà ce qu'il faut dire.}}

\section{Lagrange epicycle problem.}\label{sec:1}
\subsection{Statement of the problem.}\label{subsec:1.1}
\begin{definition}
For the fixed numbers $l_1, \ldots, l_N \in \mathbb{R}_{+}$ consider a map $\Psi$ from the $N$-torus to the complex plane that sends a point $\bm{\theta}=(\theta_1, \ldots, \theta_N) \in \mathbb{T}^N = \mathbb{R}^N / \left( 2 \pi \mathbb{Z} \right)^N$ to the point 
\begin{equation}
\label{eq:firstone}
\Psi(\bm{\theta})=\sum_{j=1}^N l_j e^{i \theta_j}.
\end{equation}
We will call $\Psi$ \emph{a swiveling arm} of type $l=(l_1, \ldots, l_N)$ on the complex plane, see Figure \ref{fig:swiveling_arm}. The intervals connecting the points $0$ and $l_1 e^{i \theta_1}$ as well as $\sum_{j=1}^k l_j e^{i \theta_j}$ and $\sum_{j=1}^{k+1} l_j e^{i \theta_j}, k=1, \ldots, N-1$ are called the \emph{joints} of the swiveling arm.
\end{definition}

The topology of $\Psi^{-1}(z)$ for some fixed $z\in \mathbb{C}$ is an interesting question, considered, among others, by J.-C. Haussmann in \cite{Haus1, Haus2}, M. Kapovich and J. Millson \cite{KapMil} and D. Zvonkine\cite{Z}. We will add a simple linear motion to this geometrical construction in a following way.

\begin{figure}
\centering
\includegraphics*[scale=1]{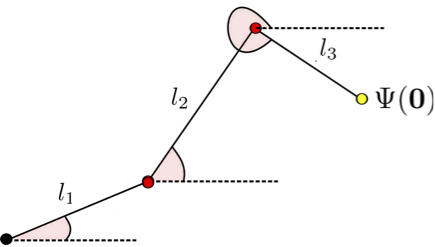}
\caption[]{A swiveling arm of type $(l_1, l_2, l_3)$ on the plane. The numbers $l_j$ are the lengths of the intervals in the arm.The angles $\theta_j$ in the equation \eqref{eq:firstone} correspond to the angles that the intervals make with the horizontal direction in a plane in the position $\Psi(\bm{\theta})$.
 }\label{fig:swiveling_arm}
\smallskip
\end{figure}

Fix $N$ real numbers $\omega_1, \ldots, \omega_N \in \mathbb{R}$ and consider a flow $g^t$ of the following constant vector field $X$ on the torus $\mathbb{T}^N$:
\begin{equation}\label{eq:vfield}
X=\sum_{j=1}^{N} \omega_j \frac{\partial}{\partial \theta_j},  \omega_j \in \mathbb{R}.
\end{equation}
Then, a function $z(t)=\Psi \circ g^t (\bm{\theta}): \mathbb{R}_+ \rightarrow \mathbb{C}$ defines a curve on the plane. The question of Lagrange was the following : does $z(t)$ have an asymptotic angular velocity and if yes, is it possible to calculate its value as a function of the parameters $l_j$ and $\omega_j$?

\begin{definition}
Consider a curve \begin{equation}\label{eq:z}
z(t)=\Psi \circ g^t (\bm{\theta}), z : \mathbb{R}_+ \rightarrow \mathbb{C},
\end{equation}
where $\Psi, T^t$ defined above by \eqref{eq:firstone} and \eqref{eq:vfield} and let $\varphi(t)$ be a continuous branch of the argument $\arg z(t)$. Then the \emph{Lagrange problem on the plane} is a question of studying the limit (first, the question of its existence and then, its numerical value)
\begin{equation}\label{eq:angular}
\omega:=\lim_{t \rightarrow \infty} \frac{\varphi(t)}{t}
\end{equation}
as a function of parameters $l_j \in \mathbb{R}_+, \omega_j \in \mathbb{R}$ and initial conditions $\bm{\theta} \in \mathbb{T}^N$. 
We call this limit \emph{asymptotic angular velocity} $\omega$. 
\end{definition}

\begin{remark}\label{remark:passage_through_zero} \textbf{Passage through zero of} $z(t)$ \textbf{.}
Note that at some moments of time $t$ the function $z(t)$ may happen to be $0$. Those are the moments when the swiveling arm closes up into a polygon (possibly, a self-intersecting one). If this happens, the continuous branch of the argument in the definition \eqref{eq:angular} of $\omega$ as it is given can't be chosen. But one can remedy to this fact : the function $\arg z(t)$ can be made continuous along the curve $z(t)$ (see explanations below). In what follows we place ourselves in this setting.
\end{remark}

First of all, if the set $\mathcal{N}=\left\{t \in \mathbb{R}_+ | z(t)=0\right\}$ is finite, then the limit \eqref{eq:angular} is obviously well defined. Even if the cardinality of the set $\mathcal{N}$ is infinite, it is still a discrete set
(by analiticity of $z$). 

Since $z(t)$ is an analytic function, in the neighborhood $U$ of its zero $t_0 \in \mathcal{N}$ we can write $z(t)=a(t)(t-t_0)$ for some $a(t)$ analytic and $a(t) \neq 0$ for $t \in U$, and $k>0, k \in \mathbb{Z}$. We are searching for a continuous solution $\varphi(t)$ of the equation 
\begin{equation}\label{eq:basic_form}
z(t)=r(t) e^{i \varphi(t)}.
\end{equation}

 For this, we set $r(t):=s |a(t)| (t-t_0)^k$ where $s=\pm 1$ is determined by the choice of the sign of $r(t),$ for $t<t_0$. Then 

\begin{equation*}
\exp (i \varphi(t)) = \frac{z(t)}{r(t)}= s \frac{a(t)}{|a(t)|}
\end{equation*}

is a well-defined function in the vicinity of $t_0$. It is determined for $t>t_0$ modulo $2 \pi$ and also as a continuous real analytic function by its values for $t<t_0$. Hence,  by induction based on the discreteness of $\mathcal{N}$, one can conclude that $\varphi(t)$ is determined by its initial value $\varphi(0)$. 

Another way of looking at $\varphi(t)$ is to say that it is defined as an integral :
\begin{equation}\label{eq:right-hand_side}
\varphi(t)=\varphi(0)+\int_0^t \mathrm{Im} \frac{z'(s)}{z(s)} ds.
\end{equation}
The right-hand side of \eqref{eq:right-hand_side} is well-defined and analytic in the vicinity of any $t_0 \in \mathcal{N}$(and hence, everywhere), since in $U$ one can write
\begin{equation}\label{eq:second_term}
\frac{z'(t)}{z(t)}=\frac{a'(t)}{a(t)}+\frac{k}{(t-t_0)},
\end{equation}
and the second term in \eqref{eq:second_term} is real.

The function $\varphi(t)$ is an only solution of the equation \eqref{eq:basic_form} with $\varphi(t)$ differentiable (and hence, $r(t)$ as well). In the equation $\frac{z'}{z}=\frac{r'}{r}+i \varphi'$ the first term on the right-hand side diverges but it doesn't count when one takes the imaginary parts.

We will use the integral representation \eqref{eq:right-hand_side} in a crucial way in Section \ref{sec:2}, and along the paper.

\bigskip
Lagrange himself considered only the simplest case of this problem of a swiveling arm with two joints, $N=2$. He proved that in a linear motion described above the longer interval "wins"~:  the limit angular velocity exists, doesn't depend on an initial condition $\bm{\theta} \in \mathbb{T}^2$ and is equal to the angular velocity of the longer interval. That is, if $l_1>l_2$ then $\omega=\omega_1$ and vice-versa, for $l_1<l_2$ we have $\omega=\omega_2$. In the case of equal lengths $l_1=l_2$ a direct computation gives $\omega=\frac{1}{2}(\omega_1+\omega_2)$ taking into account the remark \ref{remark:passage_through_zero} above.

The argument of Lagrange can be easily generalized for any $N$ to the case when the length of one of the intervals (say, the one with the index $j$) is bigger than the sum of the lengths of all other intervals.
Then the limit angular velocity $\omega$ exists and $\omega=\omega_j$. Even more, the continuous branch $\varphi(t)$ of the function $\arg z(t)$ has a linear asymptotic behavior $\varphi(t) = \omega_j t + O(1)$ when $t\rightarrow \infty$, \cite{BorgeJessen}. This case being quite simple, things do get much more complicated if the lengths of the intervals are comparable.

\subsection{Historical remarks and our motivation.}\label{subsec:1.2}
Suppose that the number of intervals $N$ as well as their angular velocities and lengths $\omega_j \in \mathbb{R}, l_j \in \mathbb{R}_+, j \in [[1,n]]$, and also initial conditions $\bm{\theta} \in \mathbb{T}^N$ are arbitrary. In this case, the question of the existence of limit angular velocity $\omega$ for Lagrange problem is quite tricky. As Lagrange writes in \cite{Lagrange}, "\emph{Il est fort difficile et peut-être même impossible de se prononcer, en général, sur la nature de l'angle} $\varphi$"\footnote{"\emph{It is hard and maybe even impossible to say something on the nature of angle $\varphi$ in the general case}"
(English translation). Lagrange's angle $\varphi$ is the continuous branch of the argument $\varphi(t)$ defined above.
 }. In 1945, following the works of P. Bohl \cite{Bohl}, B. Jessen and H. Tornehave have proven together the existence of this limit for any initial data. But we still do not know how to write out $\omega$ as a function of this data $\omega=\omega\left(\omega_j, l_j, \bm{\theta}\right)$ although some asymptotic estimates exist, see \cite{JT} for the survey of the question.

In general, the asymptotic angular velocity $\omega$ depends on initial conditions $\bm{\theta} \in \mathbb{T}^N$. Although in the case when angular velocities $\omega_j, j=1, \ldots, N$ are independent over $\mathbb{Q}$, it does not. The key idea is to replace the time average \eqref{eq:angular} by the space average. In 1937 P. Hartman, E. R. Van Kampen and A. Wintner \cite{HKW} elegantly used this idea. They provided a calculation that gave an expression for $\omega$ as a linear combination of $\omega_j$ with coefficients given by some explicit space integrals. Note that Birkhoff's ergodic theorem which is now classical, appeared just six years before the work of Hartman - Van Kampen - Wintner. H.Weyl did a considerable work in order to fill in all the technical details. In his 1938 article \cite{Weyl} Weyl explains why the ergodic theorem can be applied in the Hartman-van Kampen-Wintner case. The argument of Weyl is mostly topological. 

Of course, the rational independence of $\omega_j$ is crucial in the arguments since only in this case the flow of the vector field \eqref{eq:vfield} is ergodic. The Hartman-van Kampen-Wintner-Weyl result gives a very beautiful geometric answer to the Lagrange problem in the case when the number of joints is equal to three.

\begin{theorem} [P. Hartman, E. R. Van Kampen and A. Wintner, H. Weyl] \cite{HKW, Weyl, KSF} \label{thm:euclidian_3}
Consider the dynamics of a swiveling arm of type $l=(l_1, l_2, l_3)$ with angular velocities $\omega_j, j=1,2,3$ governed by a vector field \eqref{eq:vfield}, and a corresponding Lagrange problem on the plane. Suppose that $l_j$ satisfy all of three strict triangle inequalities and $\omega_j$ are rationally independent. Then the asymptotic angular velocity $\omega$ exists, doesn't depend on the initial condition $\bm{\theta} \in \mathbb{T}^3$ and is equal to the convex sum
\begin{equation}\label{eq:finalanswer}
\omega=\frac{\alpha_1}{\pi} \omega_1+\frac{\alpha_2}{\pi} \omega_2+\frac{\alpha_3}{\pi} \omega_3,
\end{equation}
where $\alpha_j$ are the angles in the triangle formed by intervals with sides $l_j, j=1,2,3$. The angle $\alpha_j>0$ is the angle opposite to the side of the length $l_j, j=1,2,3$, see Figure \ref{fig:swiveling_three_triangle}. 
\end{theorem}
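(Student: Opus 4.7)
The plan is to reduce the theorem to an ergodic-theoretic computation using the integral representation \eqref{eq:right-hand_side} for the continuous branch $\varphi(t)$ of the argument. Setting $z(t)=\Psi(g^t\bm{\theta})$ and differentiating, we have $z'/z=i\sum_j\omega_j\,l_j e^{i\theta_j}/\Psi$, so
\begin{equation*}
\frac{\varphi(t)-\varphi(0)}{t} \;=\; \frac{1}{t}\int_0^t F(g^s\bm{\theta})\,ds, \qquad F(\bm{\theta}) \;=\; \sum_{j=1}^3 \omega_j\,\mathrm{Re}\,\frac{l_j e^{i\theta_j}}{\Psi(\bm{\theta})}.
\end{equation*}
Since $\omega_1,\omega_2,\omega_3$ are $\mathbb{Q}$-independent, the linear flow $g^t$ is uniquely ergodic on $\mathbb{T}^3$ with respect to Lebesgue measure $\mu$, so once integrability of $F$ and pointwise (not merely a.e.) Birkhoff convergence are established, the asymptotic angular velocity exists for every $\bm{\theta}$ and equals $\omega=\int_{\mathbb{T}^3}F\,d\mu$, in particular independent of the initial condition, as claimed.

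The integrand $F$ is singular along the zero set $Z=\Psi^{-1}(0)$. I would first check that the strict triangle inequalities make $\Psi$ a submersion at each of its zeros: at a zero, the three sides $l_j e^{i\theta_j}$ close into a non-degenerate triangle, which prevents them from being colinear and so forces the differential $d\Psi=\sum_j i l_j e^{i\theta_j}\,d\theta_j$ to have real rank $2$. Hence $Z$ is a smooth $1$-submanifold of $\mathbb{T}^3$, $|\Psi|$ is comparable to the normal distance $r$ to $Z$, and the radial integral $\int r\cdot r^{-1}\,dr$ converges, so $F\in L^1(\mathbb{T}^3)$. Upgrading a.e. Birkhoff convergence to convergence for every $\bm{\theta}$ is the main technical obstacle, and is where Weyl's contribution \cite{Weyl} enters: one truncates $F$ at a height $M$, applies Weyl equidistribution to the resulting continuous truncation (for which the convergence is uniform in $\bm{\theta}$), and controls the tail by estimating the sojourn time of the orbit in $\{|F|>M\}$, which tends to zero uniformly in $\bm{\theta}$ as $M\to\infty$ thanks to the transversality of the flow lines to $Z$.

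It then remains to compute $\omega=\sum_j\omega_j I_j$, where $I_j=\int_{\mathbb{T}^3}\mathrm{Re}(l_j e^{i\theta_j}/\Psi)\,d\mu$. Integrating first in $\theta_j$ with $w=\sum_{k\neq j}l_k e^{i\theta_k}$ held fixed, the substitution $u=e^{i\theta_j}$ turns the inner average into a contour integral on the unit circle whose unique pole $u=-w/l_j$ lies inside the unit disk if and only if $|w|<l_j$; the residue theorem then shows that the inner average equals $\mathbf{1}_{\{|w|<l_j\}}$. Hence $I_j$ is the probability that the sum of two planar vectors of fixed lengths $l_k$, $k\neq j$, with independent uniform angles, has modulus less than $l_j$; by the law of cosines this probability equals $\alpha_j/\pi$, where $\alpha_j$ is the angle opposite the side of length $l_j$ in the triangle with sides $l_1,l_2,l_3$, which yields formula \eqref{eq:finalanswer}.
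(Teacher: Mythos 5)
Your proposal is correct and follows essentially the same route as the paper's classical proof in Section \ref{sec:2} (Theorem \ref{thm:euclidian_n} specialized to $N=3$): pass from the time average to the space average via unique ergodicity of the linear flow, establish integrability of the singular integrand from the submersion property of $\Psi$ on $\Psi^{-1}(0)$ (Proposition \ref{prop:submersion}), and evaluate the inner integral over $\theta_j$ as a winding number/residue equal to $\mathbf{1}_{\{|w|<l_j\}}$, whence $q_j=\alpha_j/\pi$ by the law of cosines. The only real point of divergence is how you upgrade almost-everywhere to everywhere convergence --- the paper replaces $f$ by its continuous short-time orbit average $\tilde f$ (which has the same time and space averages) and applies unique ergodicity to $\tilde f$, whereas you sketch Weyl's truncation-plus-sojourn-time argument; both work, but note the paper also offers a second, genuinely different proof in Section \ref{sec:3} via the regular/dipolar decomposition of the Lagrange form, which is the one that extends to non-zero curvature.
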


\begin{figure}
\centering
\includegraphics*[scale=0.8]{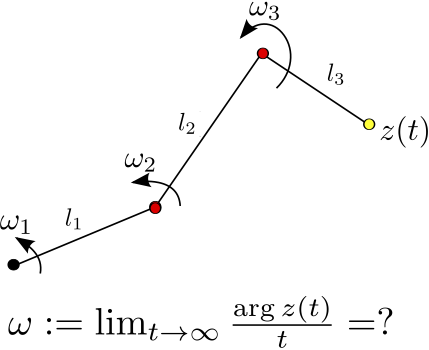}
\includegraphics*[scale=0.8]{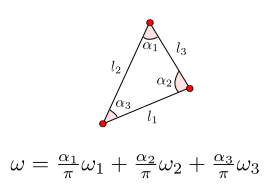}

\caption[]{Lagrange problem on the plane in Hartman-van Kampen-Wintner case when  $\omega_1, \omega_2, \omega_3$ are rationally independent. The asymptotic angular velocity $\omega$ is equal to a convex sum of $\omega_j$ with coefficients which are proportional to the angles $\alpha_j, j=1,2,3$ in the triangle which is constructed from the joints of the system. }\label{fig:swiveling_three_triangle}
\smallskip
\end{figure}

\begin{remark}
So one can see that still, in some way, a longer interval "wins": its angular velocity will be taken in a convex sum with a bigger coefficient.
\end{remark}
 
The initial motivation for us was to understand the Lagrange problem on the hyperbolic plane. We will give proper definitions in Section \ref{sec:3} but the reader can easily make her opinion about that since the definition of Lagrange problem actually uses only the concepts of intervals (geodesic segments) and angles between lines, present in any geometry.

A straightforward translation of the proof of Theorem \ref{thm:euclidian_3} from \cite{HKW} for the hyperbolic geometry is possible but involves lots of quite tedious double integrals computation. Our goal was to extract all geometrical ideas from the initial proof of the Theorem \ref{thm:euclidian_3} in order to find a new proof which will be easily translated to the hyperbolic case, without computation. 

\subsection{Plan of the paper.}\label{subsec:1.3}
In Section \ref{sec:2} we remind the reader the classical proof of Theorem \ref{thm:euclidian_3} that we repeat from its wonderful exposition in \cite{KSF} by adding the technical details. In Section \ref{sec:3} we present a new way of looking at the Largange problem (see Subsection \ref{subsec:3.1}) and give a new proof of the same Theorem \ref{thm:euclidian_3}, see Subsection \ref{subsec:3.2}. In Section \ref{sec:4} we adapt out proof from Section \ref{sec:3} for constant curvature geometries (Subsection \ref{subsec:4.1}) as well as for non-constant (but close to constant) curvature geometries (Subsection \ref{subsec:4.2}).
\bigskip

\textbf{Setting.} 
From now on and till the end of the article, we will consider the case of a swiveling arm with three joints such that the lengths of the joints $l_j,j=1,2,3$ verify all three of strict triangle inequalities $l_1 < l_2+l_3, l_2 < l_3+l_1$ and $l_3 < l_1+l_2$. In other words, there is no dominating interval whose length is bigger than the sum of the two other lengths. 

\section{Classical proof.}\label{sec:2}
In this Section we will remind a reader of the proof of a generalization of the Theorem \ref{thm:euclidian_3} for the case of a sziveling arm with $N$ joints on the plane.

\begin{theorem} [P. Hartman, E. R. Van Kampen and A. Wintner, H. Weyl] \cite{HKW, Weyl, KSF} \label{thm:euclidian_n}
Consider the dynamics of a swiveling arm of type $l=(l_1, l_2, \ldots, l_N)$ governed by a vector field \eqref{eq:vfield} with the angular velocities of joints $\omega_1, \omega_2, \ldots, \omega_N$ independent over $\mathbb{Q}$. Suppose also that $l_j \in \mathbb{R}_+$ are such that for all vectors of signs $\bm{\varepsilon}=(\varepsilon_1, \ldots, \varepsilon_N) \in \{-1, 1\}^{N}$ the signed sum of the lengths $l_j$ is not equal to zero:
\begin{equation}\label{eq:signed_sum}
\sum_{j=1}^N  \varepsilon_j l_j\neq 0.
\end{equation}
Then the solution $\omega$ of Lagrange problem on the plane exists, doesn't depend on the initial condition $\bm{\theta} \in \mathbb{T}^N$ and 
\begin{equation*}
\omega=q_1 \omega_1 + \ldots + q_N \omega_N,
\end{equation*}

where $q_k \in [0,1], k=1, \ldots, N$ are equal to the volumes of the subsets of the torus $\mathbb{T}^N$ and are defined as follows:

\begin{equation*}
q_k = \mathrm{mes}_N \left\{ \bm{\theta}=(\theta_1, \ldots, \theta_N) \in \mathbb{T}^N \left| \right.
| l_1 e^{i \theta_1}+ \ldots+ l_{k-1} e^{i \theta_{k-1}}+l_{k+1} e^{i \theta_{k+1}}+\ldots +l_N e^{i \theta_N}| < l_k
\right\}.
\end{equation*}
Here $\mathrm{mes}_N$ is the normalized Lebesgue measure on the torus $\mathbb{T}^N$.
\end{theorem}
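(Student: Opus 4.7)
The plan is to express $\omega$ as a Birkhoff time average of an explicit function on $\T^N$, invoke unique ergodicity of the linear flow $g^t$ to identify this with a space average, and finally evaluate that space integral by a one-variable residue computation. From the integral representation \eqref{eq:right-hand_side} one has
\begin{equation*}
\frac{\varphi(t)-\varphi(0)}{t}=\frac{1}{t}\int_0^t F(g^s\bm{\theta})\,ds,\qquad F(\bm{\theta})=\mathrm{Im}\frac{z'(\bm{\theta})}{z(\bm{\theta})}=\sum_{j=1}^N \omega_j\,\mathrm{Re}\frac{l_j e^{i\theta_j}}{\sum_k l_k e^{i\theta_k}}.
\end{equation*}
Since $\omega_1,\dots,\omega_N$ are rationally independent, the flow $g^t$ is uniquely ergodic with respect to Lebesgue measure on $\T^N$, so Birkhoff's theorem should give $\varphi(t)/t\to\int_{\T^N}F\,d\mathrm{mes}_N$ once integrability of $F$ is verified.

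For integrability, the only difficulty is near the zero set $Z=\{z=0\}$. The signed-sum hypothesis \eqref{eq:signed_sum} forces the differential of $\Psi$ to have maximal rank $2$ at every point of $Z$: a drop of rank would mean that all vectors $e^{i\theta_j}$ lie on a common real line through the origin, forcing $e^{i\theta_j}=\pm e^{i\alpha}$ for a common $\alpha$, and then $z(\bm{\theta})=0$ would reduce to a forbidden signed sum $\sum\varepsilon_j l_j=0$. Hence $Z$ is a smooth codimension-$2$ submanifold of $\T^N$, $|1/z|\lesssim 1/\mathrm{dist}(\cdot,Z)$ in a tubular neighbourhood, and such a singularity is $L^1$ in codimension $2$. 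Thus $F\in L^1(\T^N)$. To upgrade the a.e. convergence given by Birkhoff to convergence for every $\bm{\theta}$, I would truncate $F$ to a continuous function outside an $\varepsilon$-neighbourhood of $Z$, apply the Weyl equidistribution theorem (consequence of unique ergodicity) to the truncation, and control the remainder using the $L^1$ bound together with the codimension-$2$ estimate on the time spent by an orbit near $Z$.

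It remains to evaluate, for each $j$, the integral $I_j=\int_{\T^N}\mathrm{Re}(l_j e^{i\theta_j}/\sum_k l_k e^{i\theta_k})\,d\mathrm{mes}_N$. Fixing all $\theta_k$ with $k\neq j$ and writing $W=\sum_{k\neq j}l_k e^{i\theta_k}$, the substitution $u=e^{i\theta_j}$ turns the inner integral into a contour integral on the unit circle,
\begin{equation*}
\frac{1}{2\pi}\int_0^{2\pi}\frac{l_j e^{i\theta_j}}{l_j e^{i\theta_j}+W}\,d\theta_j=\frac{1}{2\pi i}\oint_{|u|=1}\frac{l_j\,du}{l_j u+W},
\end{equation*}
whose only pole $u=-W/l_j$ has residue $1$ and lies inside the unit disk exactly when $|W|<l_j$. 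The value is therefore automatically real and equal to $\mathbf{1}_{\{|W|<l_j\}}$; integrating over the remaining $N-1$ angles gives $I_j=q_j$. Summing yields $\omega=\sum_j q_j\omega_j$, with $\sum_j q_j=1$ automatic from the identity $\sum_j \mathrm{Re}(l_j e^{i\theta_j}/z)=1$. The main obstacle I expect is not this clean residue calculation but the upgrade from almost-every to every initial condition, since $F$ is unbounded so Birkhoff alone is insufficient and Weyl's uniform equidistribution requires continuity; the truncation argument sketched above handles this, though one could alternatively appeal to the theory of almost-periodic functions in the spirit of Jessen--Tornehave.
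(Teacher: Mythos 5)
Your overall route is the same as the paper's: write $\dot\varphi$ as the function $F(\bm{\theta})=\mathrm{Re}\bigl(\sum_j\omega_j l_j e^{i\theta_j}/\Psi(\bm{\theta})\bigr)$ evaluated along the orbit, use the signed-sum hypothesis to show (via the submersion property of $\Psi$ on $\Psi^{-1}(0)$, i.e.\ Proposition \ref{prop:submersion}) that the singular set has codimension $2$ so that $F\in L^1(\T^N)$, invoke unique ergodicity of the linear flow, and compute the space integral by a winding-number/residue argument producing the indicator of $\{|W|<l_j\}$ and hence $q_j$. All of that matches Steps 1--3 of the paper's proof and is correct as far as it goes.

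The one place you diverge is exactly the step you flag as the main obstacle, and as written it has a gap. Truncating $F$ outside an $\varepsilon$-neighbourhood $N_\varepsilon$ of $Z=\Psi^{-1}(0)$ and controlling the remainder by ``the $L^1$ bound together with the codimension-$2$ estimate on the time spent near $Z$'' is not sufficient: unique ergodicity gives, for every orbit, $\limsup_T\frac1T\,|\{s\le T:\,g^s\bm{\theta}\in N_\varepsilon\}|=O(\varepsilon^2)$, but $F$ is unbounded on $N_\varepsilon$, so smallness of the occupation time does not bound $\frac1T\int_0^T|F-F_\varepsilon|(g^s\bm{\theta})\,ds$ along an individual orbit; a layer-cake refinement over dyadic distance scales also fails at the finest scales, because the ergodic bound at scale $\delta$ only takes effect for $T\ge T(\delta,\bm{\theta})$, which blows up as $\delta\to0$. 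Closing this requires an extra geometric input, e.g.\ that each near-passage of the orbit by $Z$ contributes only $O(1)$ to $\int\mathrm{Im}(z'/z)$ (the argument turns by at most roughly $\pi$ per near-miss), combined with an $O(\varepsilon T)$ bound on the number of such passages up to time $T$. The paper sidesteps the issue with a cleaner device: it replaces $f$ by its finite-time average $\tilde f(\bm{\theta})=\frac1{T_0}\int_0^{T_0}f\circ g^t(\bm{\theta})\,dt$, which is a genuinely continuous (hence bounded) function on $\T^N$ having the same time and space averages as $f$, so that unique ergodicity applies directly and yields convergence for \emph{every} initial condition. You should either supply the per-crossing estimate or adopt this smoothing trick.
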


The additional condition \eqref{eq:signed_sum} in the formulation of the Theorem is motivated by the following

\begin{proposition}\label{prop:submersion}
Consider a swiveling arm of type $(l_1, \ldots, l_N)$ on the plane. Then, the map $\Psi: \mathbb{T}^N \rightarrow \mathbb{C}$ defined by \eqref{eq:firstone} in restriction to $\Psi^{-1}(0)$ is a submersion if and only if the condition \eqref{eq:signed_sum} holds.
\end{proposition}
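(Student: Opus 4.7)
The plan is direct: compute the differential of $\Psi$, identify when it fails to have full rank, and match this with the condition \eqref{eq:signed_sum}.

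First, I would compute $d\Psi_{\boldsymbol{\theta}}$. Since $\partial_{\theta_j}\Psi = i l_j e^{i\theta_j}$, the image of $d\Psi_{\boldsymbol{\theta}}$ is the real linear span in $\mathbb{C} \cong \mathbb{R}^2$ of the vectors $\{i l_j e^{i\theta_j}\}_{j=1}^N$, which (since each $l_j > 0$ and multiplication by $i$ is a rotation) equals the real span of $\{e^{i\theta_j}\}_{j=1}^N$. Thus $\Psi$ fails to be a submersion at $\boldsymbol{\theta}$ if and only if all the unit vectors $e^{i\theta_j}$ lie on a single real line through the origin, i.e., there exists $\alpha \in \mathbb{R}$ and signs $\varepsilon_j \in \{-1,+1\}$ with $e^{i\theta_j} = \varepsilon_j e^{i\alpha}$ for every $j$.

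For the forward implication, suppose $\Psi$ is not a submersion at some $\boldsymbol{\theta} \in \Psi^{-1}(0)$. Write $e^{i\theta_j} = \varepsilon_j e^{i\alpha}$ as above; then
\begin{equation*}
0 = \Psi(\boldsymbol{\theta}) = \sum_{j=1}^N l_j \varepsilon_j e^{i\alpha} = e^{i\alpha} \sum_{j=1}^N \varepsilon_j l_j,
\end{equation*}
and since $e^{i\alpha}\ne 0$ this yields $\sum_j \varepsilon_j l_j = 0$, violating \eqref{eq:signed_sum}. Conversely, if \eqref{eq:signed_sum} fails, choose $\boldsymbol{\varepsilon}$ with $\sum_j \varepsilon_j l_j = 0$ and set $\theta_j = 0$ when $\varepsilon_j = +1$ and $\theta_j = \pi$ when $\varepsilon_j = -1$. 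Then $\Psi(\boldsymbol{\theta}) = \sum_j \varepsilon_j l_j = 0$, so $\boldsymbol{\theta}\in \Psi^{-1}(0)$; and the vectors $e^{i\theta_j} = \pm 1$ all lie on the real axis, so the real span is one-dimensional and $\Psi$ is not a submersion at $\boldsymbol{\theta}$.

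There is no real obstacle here; the only point that deserves care is recording correctly the equivalence \emph{``all $e^{i\theta_j}$ are collinear as real vectors''} $\Leftrightarrow$ \emph{``there exist common signs $\varepsilon_j$ and a phase $\alpha$ with $e^{i\theta_j}=\varepsilon_j e^{i\alpha}$''}, since this is what converts the rank condition on $d\Psi$ into the purely combinatorial condition \eqref{eq:signed_sum}. Everything else is a one-line computation.
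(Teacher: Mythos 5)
Your proof is correct and follows essentially the same route as the paper: compute $d\Psi_{\bm{\theta}}=i\left(l_1e^{i\theta_1},\ldots,l_Ne^{i\theta_N}\right)$, observe that the rank drops exactly when all the $e^{i\theta_j}$ are real-collinear, and combine this with $\Psi(\bm{\theta})=0$ to obtain the signed-sum condition. You are in fact slightly more explicit than the paper in writing out both directions of the equivalence.
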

\begin{proof}
By calculating explicitly the differential $d \Psi_{\bm{\theta}}: \mathbb{R}^N \rightarrow \mathbb{C} \simeq \mathbb{R}^2$ we obtain 
\begin{equation*}
d \Psi_{\bm{\theta}}=i \left(l_1 e^{i \theta_1}, \ldots, l_N e^{i \theta_N}\right).
\end{equation*}
This $2 \times N$ matrix has its rank smaller than $2$ if and only if the complex numbers $l_j e^{i \theta_j}$ are all $\mathbb{R}$–proportional, in other words the corresponding vectors lie on the same line passing by $0 \in \mathbb{C}$. One considers the restriction $\Psi|_{\Psi^{-1}(0)}$. Conditions $\mathrm{rk} \; d \Psi_{\bm{\theta}} <2$ and $\Psi(\bm{\theta})=0$ together are equivalent to the existence of the coefficients $\varepsilon_j \in \{-1,1\}$ such that $\sum_{j} l_j \varepsilon_j=0$ with $e^{i \theta_j}=\varepsilon_j$. Hence $\theta_j=0$ (if $\varepsilon_j=1$) or $\theta_j=\pi$ (if $\varepsilon_j=-1$).
\end{proof}

\begin{remark}
Before starting a proof of the Theorem \ref{thm:euclidian_n}, let us first notice that it implies Theorem \ref{thm:euclidian_3}.
First, let us note that condition \eqref{eq:signed_sum} holds true for $l_j$ that satisfy all three triangle inequalities: the triangle with sides $l_j$ is a rigid form that can't be flattened into a line.


Theorem \ref{thm:euclidian_n} gives

\begin{equation}\label{eq:insider}
q_3=\mathrm{mes}_2 \left\{ (\theta_1, \theta_2) \in \mathbb{T}^2 \left| \right.
| l_1 e^{i \theta_1}+ l_{2} e^{i \theta_2}| < l_3
\right\}.
\end{equation}

For any fixed $\theta_1$ one can easily see (as on the Figure \ref{pic:explanation}) that the measure in question is equal to $\frac{\alpha_3}{\pi}$ (after renormalizing), i.e. it doesn't depend on $\theta_1$. Then the integration with respect to $\theta_1$ will give $q_3=\frac{\alpha_3}{\pi}$. Because of the symmetry of the answer with respect to the exchange of the sides, we get the final answer \eqref{eq:finalanswer}. 
\begin{figure}
\centering
\includegraphics*[scale=0.9]{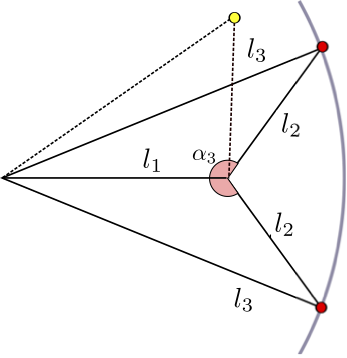}
\caption[]{Let us fix some value $\theta_1$ corresponding to the position of the first interval (here $\theta_1=0$). On the picture one can see the geometrical meaning of the set appearing in \eqref{eq:insider}. The angles $\theta_2$ which give the points $(\theta_1, \theta_2)$ inside this set correspond to the position of the second joint such that the sum $l_1 e^{i \theta_1}+ l_{2} e^{i \theta_2}$ stays inside the circle of radius $l_3$. These positions are marked by the angle range in the interval  $\theta_2 \in (-\alpha_3, \alpha_3)$. The two "boundary" positions are those that correspond to the moments $\bm{\theta}$ when $\Psi(\bm{\theta})=0$. These moments are the moments when a swiveling arm closes up into a triangle. }\label{pic:explanation}
\smallskip
\end{figure}
\end{remark}

\begin{lemma}[A rotating system of coordinates]\label{lemma:rotating}
Suppose that the limit asymptotic velocity in the Lagrange problem exists for the dynamics of a swiveling arm of type $(l_1, l_2, l_3)$ with angular velocities $\omega'_1=0, \omega'_2=\omega_2-\omega_1$ and $\omega'_3=\omega_3-\omega_1$ and is equal to $\omega$. Then the limit asymptotic velocity exists as well for the dynamics of a swiveling arm of the same type with angular velocities $\omega_1, \omega_2, \omega_3$ and is equal to $\omega_1+\omega$.
%
\end{lemma}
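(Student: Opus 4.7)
The proof is essentially a change of reference frame: multiplication by the unimodular factor $e^{-i\omega_1 t}$ converts one dynamical system into the other. The plan is to express this algebraically using the integral representation \eqref{eq:right-hand_side} of the continuous argument, which is perfectly suited to handle the (discrete) set $\mathcal{N}$ of zeros in a painless way.

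First I would set $z(t) = \sum_{j=1}^3 l_j e^{i(\omega_j t + \theta_j)}$ and $\tilde z(t) = \sum_{j=1}^3 l_j e^{i((\omega_j-\omega_1) t + \theta_j)}$, and observe the pointwise identity $\tilde z(t) = e^{-i\omega_1 t} z(t)$. In particular, the zero set $\tilde{\mathcal{N}}$ of $\tilde z$ coincides with $\mathcal{N}$ (the factor $e^{-i\omega_1 t}$ never vanishes), so the discussion of the continuous branches $\varphi(t)$ of $\arg z(t)$ and $\tilde\varphi(t)$ of $\arg \tilde z(t)$ given in Remark~\ref{remark:passage_through_zero} applies verbatim to both.

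Next, away from $\mathcal N$ one has
\begin{equation*}
\frac{\tilde z'(t)}{\tilde z(t)} \;=\; \frac{z'(t)}{z(t)} - i\omega_1,
\end{equation*}
so taking imaginary parts gives $\mathrm{Im}\bigl(\tilde z'/\tilde z\bigr) = \mathrm{Im}(z'/z) - \omega_1$. Since this identity is between analytic functions that extend continuously across each zero (as explained after \eqref{eq:second_term}), integration via \eqref{eq:right-hand_side} yields
\begin{equation*}
\tilde\varphi(t) \;=\; \varphi(t) - \omega_1 t + C,
\end{equation*}
where $C = \tilde\varphi(0) - \varphi(0)$ is a constant depending only on the choice of initial branches. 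Dividing by $t$ and letting $t\to\infty$, the hypothesis $\tilde\varphi(t)/t \to \omega$ immediately gives $\varphi(t)/t \to \omega_1 + \omega$, which is the desired conclusion; moreover, the independence of the limit with respect to $\bm\theta\in\mathbb T^3$ transfers across the correspondence since the two systems share the same initial condition.

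The only genuine subtlety — and thus the step I would be most careful about — is justifying the integral identity on the full real line rather than just off $\mathcal N$. This is not a real obstacle, since the paper has already done the work: the right-hand side of \eqref{eq:right-hand_side} is analytic on all of $\mathbb R_+$ for both $z$ and $\tilde z$, so the pointwise identity for $\mathrm{Im}(z'/z)$ extends by analyticity from $\mathbb R_+ \setminus \mathcal N$ to all of $\mathbb R_+$, and termwise integration is then unproblematic. No ergodicity or triangle inequality hypothesis is used anywhere in the argument — this is a purely kinematic reduction that passes to arbitrary swiveling arms.
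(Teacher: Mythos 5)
Your proof is correct and follows the same idea as the paper's: the two systems are related by the rotation $z(t)=e^{i\omega_1 t}\tilde z(t)$, so their continuous arguments differ by $\omega_1 t$ plus a constant and the asymptotic velocities differ by $\omega_1$. The paper states this in two lines; you merely make the branch-of-argument bookkeeping explicit via the integral representation \eqref{eq:right-hand_side}, which is a harmless (and welcome) elaboration rather than a different route.
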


\begin{proof}
\normalfont
The two systems described in the formulation, one with corresponding angular velocities of joints $(0, \omega_2-\omega_1, \omega_3-\omega_1)$  and another with $(\omega_1, \omega_2, \omega_3)$, are related by the rotation. Indeed, the position of the end point $z_2(t)$ of the second system at time $t$ is just the image of the position of the endpoint $z_1(t)$ for the first one under the rotation by $\omega_1 t $ around $0$. 
\end{proof}

Now we are ready to give the proof of Theorem \ref{thm:euclidian_n}. As we said above, the main ideas are all described in \cite{KSF} in a very clear and concise way but we find it useful to present this argument here for the sake of completeness and clarity.

\begin{proof}
\textbf{Step 1. Main idea: pass from the time average to the space average.}

We are interested in the asymptotic behavior of the argument of the function $z(t): \mathbb{R}_+ \rightarrow \mathbb{C}$ given by \eqref{eq:z}. Let us write out $z(t)$ in the polar form, $z(t)=r(t) \exp \varphi (t)$. A formal computation gives $\ln z(t)=\ln r(t)+i \varphi(t)$ and, by passing to a real part and then taking a derivative with respect to $t$, we obtain the expression for the derivative of the angle
\begin{equation}\label{eq:for_varphi}
\dot{\varphi}=\re \left( \frac{1}{i} \frac{\dot{z}(t)}{z(t)} \right).
\end{equation}

Here by $\varphi(t)$ we understand a continuous branch of the argument and this computation gives a valid formula at least in the case when $z(t) \neq 0 \;  \forall t \in \mathbb{R}_+$. 

The derivative $\dot{\varphi}$ is precisely the quantity that is interesting for us since the asymptotic angular velocity $\omega$ is the ratio between the increment of the angle function $\varphi(t)$ on the long period of time $T$ and $T$ itself. That can be calculated by Newton-Leibniz as 

\begin{equation}\label{eq:newomega}
\omega=\lim_{T \rightarrow \infty} \frac{\varphi(T)}{T}=\lim_{T \rightarrow \infty} \frac{1}{T}\int_0^T\dot{\varphi}(t) dt.
\end{equation}

The main idea of Hartman, van Kampen and Wintner was that instead of calculating the time average \eqref{eq:newomega}, one can transform it to the space average of some function $f$.

Indeed, let us insert in the equation \eqref{eq:for_varphi} an explicit formula \eqref{eq:firstone} for $z(t)$. We obtain

\begin{equation}\label{eq:varphi_calculation}
\dot{\varphi}= \re \left(
\frac{\sum_{j=1}^N l'_j \omega_j e^{i \omega_j t}}{\sum_{j=1}^N l'_j e^{i \omega_j t}}
\right)=
 \re \left(\frac{\sum_{j=1}^N l_j \omega_j e^{i (\omega_j t+\theta_j^{(0)})}}{\sum_{j=1}^N l_j e^{i (\omega_j t+\theta_j^{(0)})}}
\right).
\end{equation}

Here $l'_j = l_j e^{i \theta_j^{(0)}}$ where $\bm{\theta}^{(0)}=\left(\theta_1^{(0)}, \ldots, \theta_N^{(0)}\right) \in \mathbb{T}^N$ is a vector corresponding to the initial position of the swiveling arm.

Let us define $f: \mathbb{T}^N \rightarrow \overline{\mathbb{R}}$ as 

\begin{equation}\label{eq:definition_f}
f(\bm{\theta}):= \re  \left(\frac{\sum_{j=1}^N l_j \omega_j e^{i \theta_j}}{\sum_{j=1}^N l_j e^{i \theta_j}}
\right).
\end{equation}

Then, in previous notations, \eqref{eq:varphi_calculation} can be rewritten simply as $\dot{\varphi}=f\left(T^t \bm{\theta}^{(0)}\right)$ and hence $\omega$ (if it exists) is represented by the limit

\begin{equation}\label{eq:eqlimit}
\omega = \lim_{T \rightarrow \infty} \frac{1}{T} \int_0^T f \left(
g^{t} \bm{\theta}^{(0)} \right) dt.
\end{equation}

The idea is to apply the ergodic theorem for the flow $T^t$ to substitute the limit \eqref{eq:eqlimit} by the space integral in order to write $\omega=\int_{\mathbb{T}^N} f(\bm{\theta}) d\bm{\theta}$. This is actually true but now let us prove it properly: the difficulty is that the denominator in the definition \eqref{eq:definition_f} of the function $f$ explodes when $\Psi(\bm{\theta})=0$.

\bigskip

\textbf{Step 2. Justifying the use of ergodic theorem.}

First note that the function $f: \mathbb{T}^N \rightarrow R$ defined by \eqref{eq:definition_f} is integrable. Indeed, to prove this it is sufficient to prove that the function $\frac{1}{\Psi}$ is integrable. By Proposition \ref{prop:submersion}, $\Psi: \mathbb{T}^N \rightarrow \mathbb{C}$ is a submersion on $\Psi^{-1}(0)$ which has codimension $2$.
Hence in the neighborhood of any pole of $f$(equivalently, zero of $\Psi$), there is a complex chart $w \in \mathbb{C}$ on the local plane, transverse to $\Psi^{-1}(0)$ in which $\Psi(w)=w$. Hence the reciprocal $\frac{1}{\Psi}$ is integrable since $\frac{1}{|w|} \in L^1_{\mathrm{loc}} \mathbb{C}$.

%
%

The function $f$ is integrable but is not continuous since the denominator $\Psi(\bm{\theta})$ can be $0$. By averaging the function $f$ on the part of the trajectory of $g^t$ ranging from time $0$ to time $T_0 \in \mathbb{R}, T_0>0$, we get a continuous function on the torus $
\tilde{f} \in C(\mathbb{T}^N)$ :

$$\tilde{f}(\bm{\theta}):= \frac{1}{T_0} \int_0^{T_0} f \circ g^t (\bm{\theta}) dt.$$
The proof of the continuity of the function $\tilde{f}$ uses the Remark \ref{remark:passage_through_zero}. Indeed, the curve $g^t (\bm{\theta})$ for $t\in (0,T_0)$ and the analogical curve for a close $\theta$ have the property that one of them goes through zero and another doesn't but the argument change is the same (modulo $\pi$) since it is defined by the change of the slope of a tangent line to such a curve. Note also that the time averages as well as space averages of the functions $f$ and $\tilde{f}$ coincide.

Indeed, for the space averages since $g^t$ is a measure-preserving flow,

\begin{multline}
\int_{\mathbb{T}^N} \tilde{f} (\bm{\theta})= \int_{\mathbb{T}^N} \frac{1}{T_0} \int_{0}^{T_0} f \circ g^t (\bm{\theta}) dt d\bm{\theta} = \int_0^{T_0} \frac{1}{T_0} \int_{\mathbb{T}^N} f \circ g^t (\bm{\theta}) d \bm{\theta} dt = \\= \int_0^{T_0} \frac{1}{T_0} \int_{\mathbb{T}^N} f(\theta) d\bm{\theta} dt= \int_{\mathbb{T}^N} f(\bm{\theta}) d\bm{\theta}.
\end{multline}

And for the time averages $f_{\infty}(\bm{\theta})$ and $\tilde{f}_{\infty}(\bm{\theta})$, analogously, we get

\begin{multline}
\tilde{f}_{\infty}(\bm{\theta}):=\lim_{T \rightarrow \infty} \frac{1}{T} \int_0^T \tilde{f} \circ g^t(\bm{\theta}) dt=\lim_{T \rightarrow \infty} \frac{1}{T} \int_0^T \frac{1}{T_0} \int_0^{T_0} f \circ g^{t+\tau} (\bm{\theta}) d\tau d t =\\= \frac{1}{T_0} \int_0^{T_0} \lim_{T \rightarrow \infty} \frac{1}{T} \int_{0}^T f \circ g^{t+\tau} (\bm{\theta}) dt d \tau = f_{\infty} (\bm{\theta}).
\end{multline}

Note that the flow $g^t$ is uniquely ergodic (since $\omega_j$ are rationally independent
\footnote{The same assumptions about $\omega_j$ hold for the Theorem \ref{thm:constant_curvature} 
and the swiveling arm on the hyperbolic plane.}) 
and $\tilde{f} \in C(\mathbb{T}^n)$ hence the space averages of $\tilde{f}$ coincide with time averages of $\tilde{f}$ for all (and not only almost all) values  of $\theta \in \mathbb{T}^N$. Hence the same is true for the function $f$ and the limit \eqref{eq:eqlimit} can be written as a space average for all ${\theta} \in \mathbb{T}^N$. Hence we obtain that the limit 
for any initial position of the swiveling arm $z(0) \in \mathbb{C}$ is just given by the space integral that can be explicitly calculated.

\bigskip

\textbf{Step 3. Calculation.}

Denote  $B_j:=B(\theta_1,  \ldots, \theta_{j-1}, \theta_{j+1}, \ldots, \theta_N):= \Psi(\bm{\theta})-l_j e^{i \theta_j}$. This quantity doesn't depend on $\theta_j$. Then,
\begin{multline*}
\int_{\mathbb{T}^N} f(\theta) d\theta= \re \int_{\mathbb{T}^N} \frac{\sum_{j} \omega_j l_j e^{i \theta_j}}{\sum_j l_j e^{i \theta_j}} d\theta_1 \ldots d\theta_N = 
\sum_{j=1}^{N} \omega_j l_j \re \int_{\mathbb{T}^N} \frac{e^{i \theta_j} d\theta_1 \ldots d\theta_N}{\sum_j l_j e^{i \theta_j}}=\\
=\sum_{j=1}^N \omega_j l_j \re \int_{\mathbb{T}^{N-1}} \int_0^{2 \pi} \frac{e^{i \theta_j} d\theta_j}{l_j e^{i \theta_j}+B(\theta_1, \ldots, \theta_{j-1}, \theta_{j+1}, \ldots, \theta_N)} d\theta_1 \ldots  d\theta_{j-1} d\theta_{j+1} \ldots d\theta_N=\\
=\sum_{j=1}^N \omega_j l_j \re \int_{\mathbb{T}^{N-1}} \int_0^{2 \pi} \frac{1}{i l_j} \frac{\partial \ln (B_j + l_j e^{i \theta_j})}{\partial \theta_j} d\theta_1 \ldots  d\theta_{j-1} d\theta_{j+1} \ldots d\theta_N=\\
\sum_{j=1}^N \omega_j \re \int_{\mathbb{T}^{N-1}} \int_0^{2 \pi}   \frac{1}{i} \frac{\partial \ln (B_j + l_j e^{i \theta_j})}{\partial \theta_j} d\theta_1 \ldots  d\theta_{j-1} d\theta_{j+1} \ldots d\theta_N.
\end{multline*}

Now note that the internal integral over $\theta_j$ is equal to $1$ if $0$ is inside the circle of center $B_j$ and radius $l_j$, in other words if $l_j>B_j$ and $0$ otherwise. So from this we deduce that

\begin{equation*}
\int_{\mathbb{T}^N} f(\theta) d\theta= \sum_{j=1}^{N} \omega_j \mathrm{mes}_{N-1} \left\{\theta: B(\theta_1,  \ldots, \theta_{j-1}, \theta_{j+1}, \ldots, \theta_N)<l_j \right\}.
\end{equation*}
\end{proof}
\section{Adapted proof: evaluation of the dipolar form.}\label{sec:3}
Let us consider a map $\arg: \tilde{\mathbb{C}} \rightarrow \mathbb{R}$  from the covering space of a punctured complex plane, $\tilde{\mathbb{C}}  \rightarrow\mathbb{C}^*$. This map gives an argument of a complex number different from $0$.  For any analytic curve $\gamma: \mathbb{R} \rightarrow \mathbb{C}$ on the plane the restriction of this \emph{argument map} on this curve $\gamma$ by $\arg_{\gamma} : \mathbb{R} \rightarrow \mathbb{R}$ gives a map that defines the argument $\arg \gamma (t)$ of the point on the curve. Each time we use this notation we suppose taking the continuous branch of the argument function (see the Remark \ref{remark:passage_through_zero} for the case when $\gamma$ passes through $0$).

For the case of Lagrange problem, we will be interested in taking as a curve $\gamma$ a trajectory $z(t)$ of the flow $\Psi \circ T^t$, as in \eqref{eq:z}. This trajectory can be seen as a map $z: \mathbb{R}_+ \rightarrow \mathbb{C}$. The map $\Psi: \mathbb{T}^N \rightarrow \mathbb{C}$ transports the singular $1$-form $d \arg z$ on the complex plane to a $1$-form on the torus that we will denote $\beta:=\Psi^* d\arg z$ and call the \emph{Lagrange form}.
This form $\beta$ is singular since $\Psi^{-1}(0) \neq \emptyset$. Indeed, for the case of three joints in the Lagrange problem, the set $\Psi^{-1}(0)$ corresponds to the set of $\bm{\theta}$ when the swiveling arm closes up into a triangle. In what follows, we will study regular and singular parts of Lagrange form $\beta$ and we will find a geometrical way to calculate its time average 
$\lim_{T \rightarrow \infty} \frac{1}{T} \int_0^T z^* \beta$. This time average can be seen as an average of the image of the form $\beta$ transported by the map $z$ but also it is exactly equal to the limit angular velocity $\omega$ we are interested in.

\subsection{Dipolar form and its properties.}\label{subsec:3.1}
In this Subsection we will first prove some statements about the integration of regular $1$-forms along the orbits of vector fields. Second, we will define a dipolar form $\beta_{\sing}$ on the torus - a specific singular form that will encode the singularities of the form $\beta$. We will see that the dipolar form contains all the important geometric information for the calculation of $\omega$. The idea is simple: the important changes of the argument occur only when the swiveling arm passes by zero. In other words, they occur when a trajectory of the vector field \eqref{eq:vfield} passes by the singularities of the dipolar form. 


\begin{lemma}\label{lemma:ergodic}

Consider a manifold $M$ with a measure $\mu$ on it and a uniquely ergodic flow $g^t: M \rightarrow M$ of a vector field $X$ on $M$, the measure $\mu$ being the only invariant measure. Then, the following assertions hold:
\begin{enumerate}
\item For any point $\bm{\theta} \in M$ and for any continuous function $f \in C^0 (M, \mathbb{R})$ there exists a limit of time averages $\lim_{T \rightarrow \infty} \frac{1}{T} \int_0^T f \circ g^t (\bm{\theta}) dt$ and this limit doesn't depend on the point $\bm{\theta} \in M$ and is equal to the space average $\int_M f d\mu$.
\item For $\bar{f}=f+X(h)$, where $h \in C^1(M, \mathbb{R})$ is any continuously differentiable function on $M$, the time average of $\bar{f}$ coincides with that of $f$.

\item For any closed $1$-form $\beta$ on $M$ define the function $f:=\beta(X)$. Then the space average $\int_{M} f d\mu$ depends only on the cohomology class of $\beta$.

\item Let  $M=\mathbb{T}^N$ and $X$ be given by \eqref{eq:vfield}. Then for any smooth $1$-form $\beta$ holds $\int_M \beta (X) = \left< [\beta] , [\omega_1, \ldots, \omega_N] \right>$. Here  $[\beta] \in H^1(\mathbb{T}^N, \mathbb{R})$ and $[\omega_1, \ldots, \omega_N] \in H_1(\mathbb{T}^N, \mathbb{R})$ denotes the sum of standard coordinate circles with coefficients $\omega_j \in \mathbb{R}$. We denote as $\left<\cdot, \cdot\right>$ the pairing between cohomology and homology. Note that $[\beta]$ has a representative $\beta_{\mathrm{reg}} \in [\beta]$ with constant coefficients $\beta_j \in \mathbb{R}$: $\beta_{\mathrm{reg}}=\sum_{j=1}^N \beta_j d \theta_j$ and $\int_M \beta(X)=\sum_{j=1}^N \beta_j \omega_j$.
\end{enumerate}
\end{lemma}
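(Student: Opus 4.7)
My plan is to prove the four assertions sequentially, since each builds on the previous one.

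For assertion (1), this is precisely Oxtoby's classical characterization of unique ergodicity of a continuous flow on a compact metric space: uniform convergence of the Birkhoff time averages to the space average, for every continuous test function. I will simply invoke it. For assertion (2), I will use that $X$ is the infinitesimal generator of $g^t$, so
\[ X(h)\circ g^t(\bm{\theta}) \;=\; \frac{d}{dt}\,h\!\bigl(g^t\bm{\theta}\bigr). \]
Integrating telescopes to
\[ \int_0^T X(h)\circ g^t(\bm{\theta})\,dt \;=\; h\!\bigl(g^T\bm{\theta}\bigr)-h(\bm{\theta}), \]
which is bounded by $2\sup_M|h|$ since $M$ is compact and $h$ is continuous. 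Dividing by $T$ and letting $T\to\infty$ shows this term contributes nothing to the time average, so the time averages of $\bar f$ and $f$ agree at every point.

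For assertion (3), two closed smooth $1$-forms in the same cohomology class differ by $dh$ for some smooth $h$ (de Rham theorem; on a compact manifold one can pick $h\in C^\infty(M,\mathbb{R})$). Evaluating on $X$ gives $(\beta+dh)(X)=\beta(X)+X(h)$, so by assertion (2) these two continuous functions have the same time average; by assertion (1), both equal their common space average, which is therefore a cohomological invariant.

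For assertion (4), recall that on $\mathbb{T}^N$ the de Rham cohomology $H^1(\mathbb{T}^N,\mathbb{R})$ is spanned by the classes $[d\theta_j]$, dual to the basis of $H_1(\mathbb{T}^N,\mathbb{R})$ formed by the coordinate circles $[\gamma_j]$. Every closed smooth $1$-form $\beta$ is therefore cohomologous to a representative with constant coefficients $\beta_{\mathrm{reg}}=\sum_{j=1}^N\beta_j\,d\theta_j$, with $\beta_j$ recovered by pairing $[\beta]$ against $[\gamma_j]$. By assertion (3), $\int_M\beta(X)\,d\mu=\int_M\beta_{\mathrm{reg}}(X)\,d\mu$; and $\beta_{\mathrm{reg}}(X)=\sum_j\beta_j\omega_j$ is a constant function on $M$ whose integral is precisely $\langle[\beta],[\omega_1,\ldots,\omega_N]\rangle$.

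There is no technical hurdle in the arguments themselves; each step is either a telescoping identity, a standard de Rham fact, or the definition of unique ergodicity. The only piece of bookkeeping worth flagging is that assertion (4) tacitly requires $\beta$ to be closed for $[\beta]$ to be defined, which is not the case for the Lagrange form $\beta=\Psi^{*}d\arg z$ on $\mathbb{T}^N$: this form is only closed away from $\Psi^{-1}(0)$. In the sequel one will therefore have to split $\beta$ into a regular (closed) piece, to which the lemma applies directly, and a singular piece localized near $\Psi^{-1}(0)$, to be dealt with separately.
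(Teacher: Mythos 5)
Your proof is correct and follows essentially the same route as the paper: unique ergodicity for everywhere-convergence of time averages in (1), the Newton--Leibniz telescoping bound for (2), reduction of (3) to (2) via $\bar{\beta}=\beta+dh$, and the constant-coefficient representative coming from $H^1(\mathbb{T}^N,\mathbb{R})\cong\mathbb{R}^N$ for (4). Your closing remark that the Lagrange form itself is not closed on all of $\mathbb{T}^N$ and must be split into regular and singular pieces is exactly the strategy the paper adopts in the sequel.
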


\begin{proof}
\begin{enumerate}
\item The existence of the limit and its independence from the initial point $\bm{\theta} \in M$ follows from Birkhoff's ergodic theorem. 

\item The difference between time averages of $f$ and $\bar{f}$ can be rewritten by Newton-Leibniz. Since $g$ is a bounded function, we obtain
\begin{equation}\label{eq:proof}
\lim_{T \rightarrow \infty} \frac{1}{T} \int_0^T X(h) \circ g^t (\bm{\theta}) dt=\lim_{T \rightarrow \infty}  \frac{h(g^T(\bm{\theta}))-h(\bm{\theta})}{T} =0.
\end{equation}
\item We have to prove that the space average $\int_M \beta(X) d \mu$ doesn't change if $\beta$ is replaced by $\bar{\beta}=\beta+dh$ where $h \in C^1(M, \mathbb{R})$. This can be deduced from $(2)$: indeed, the space average $\int_M \bar{\beta}(X) d \mu$ is equal to the corresponding time average (by ergodic theorem), and then one applies \eqref{eq:proof} to finish the argument.

\item The first statement is the application of $(3)$ to this particular case $M=\mathbb{T}^n$, $X=\sum_j \omega_j \frac{\partial}{\partial \theta_j}$. Each form $\beta \in H^1(\mathbb{T}^N, \mathbb{R})$ has a representative with constant coefficients since $H^1(\mathbb{T}^N, \mathbb{R}) \cong \mathbb{R}^N$. And hence $\int_{\mathbb{T}^N} \beta (X)$ for a smooth form $\beta$ is equal to the corresponding value for its cohomology representative with constant coefficients. $\square$

\end{enumerate}
\end{proof}

Now let is fix two distinct points $a,b \in \mathbb{C}$. Let us consider a following multifunction $f$ on the complex plane: $f(z)=\arg \frac{z-a}{z-b}$. This multifunction can not be defined on all of the plane in a continuous way although it is well defined outside a large enough ball $B(R)=\{ |x| \leq R\}$ containing $a$ and $b$, see Figure \ref{pic1}.

\begin{figure}
\begin{center}
\includegraphics*[scale=0.8]{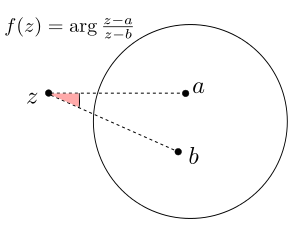}
\caption[]{For the multivalued function $f(z)=\arg \frac{z-a}{z-b}$ outside the big ball $B(R)$ containing points $a$ and $b$ one can define a continuous determination of $f$ as an angle between two rays connecting $z$ to $a$ and to $b$ correspondingly.}\label{pic1}
\smallskip
\end{center}
\end{figure}

Let us choose a function $\bar{f}: \mathbb{C} \rightarrow \mathbb{R}$ such that $\bar{f}=f$ in $\mathbb{C} \setminus B(R)$ and $\bar{f} \in C^{\infty}$. Then $h=f-\bar{f}$ is a multifunction such that $h=0$ in $\mathbb{C} \setminus B(R)$.

\begin{definition}[Dipolar form]
A dipolar form is a singular $1$-form $dh$ on the complex plane.
\end{definition}

\subsection{New proof of Theorem \ref{thm:euclidian_3}.}\label{subsec:3.2}
Let us consider the Lagrange form $\beta$ on the torus: our goal is to understand its time average along the orbits of a linear flow $T^t$ on the torus $\mathbb{T}^N$. What was said before in this Section, can be applied to any dimension but from now on we will study the particular case $N=3$. First of all, by Lemma \ref{lemma:rotating}, one can reduce dimension to $2$ and suppose that the system is governed by the field \eqref{eq:vfield} with $\omega_1=0$. 

From now on we will look at the map $\Psi$ as at the map from a $2$-torus to $\mathbb{C}$, and the Lagrange form $\beta$ will be considered as a form on a $2$-torus as well (we will speak about the \emph{reduced Lagrange form} in this case). This torus $\mathbb{T}^2$ is equipped with coordinates $(\theta_2, \theta_3)$ that correspond to the angles that the second and the third joint make with a horizontal direction. 

As we have already seen in the proof of Section \ref{sec:2} as well as in the Subsection \ref{subsec:3.1} of this Section, the important increments of the argument of $z(t)$ are those corresponding to the passages through zero. In other words, the singular set $\Psi^{-1}(0)$ is of importance in the Lagrange problem. In the case when $\omega_1=0$ the set $\Psi^{-1}(0)$ consists of two different points $A, B \in \mathbb{T}^2$ that correspond to the positions of the swiveling arm depicted on Figure \ref{pic2}. One can note that \begin{equation}\label{eq:ABcoordinates}
A=(-\pi+\alpha_3, \pi-\alpha_2),\; 
B=(\pi-\alpha_3, \pi+\alpha_2).
\end{equation}

\begin{figure}
\centering
\includegraphics*[scale=1]{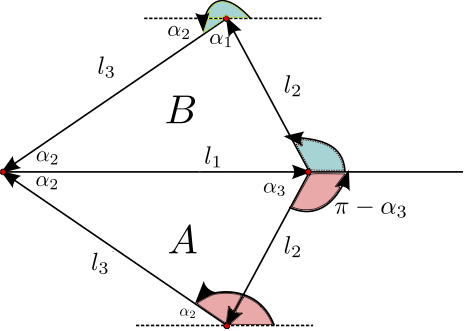}
\caption[]{Two positions of the swiveling arm of type $(l_1, l_2, l_3)$ corresponing to the situations when this swiveling arm forms a triangle. These two positions correspond to the points $A,B \in \mathbb{T}^2$ that have the following coordinates $(\theta_2, \theta_3) \in \mathbb{T}^2$: $A=(-\pi+\alpha_3, \pi-\alpha_2)$ and $B=(\pi-\alpha_3, \pi+\alpha_2)$.
These coordinates are the counter-clockwise oriented angles that the joints of the arm form with horizontal direction. They are explicitely marked on the picture.}\label{pic2}
\smallskip
\end{figure}

Now, the dipolar form that we defined on $\mathbb{C}$ in Subsection \ref{subsec:3.1} can be transported to a $1$-form on $\mathbb{T}^2$ in such a way that its singularities $a,b$ are transported to the points $A,B \in \mathbb{T}^2$. For this, we will choose a disk on the torus containing the points $A,B$ and transport the dipolar form on the plane to the form that we denote $\beta_{\mathrm{sing}}$. 

\begin{remark}
This dipolar form on the torus depends on the choice of the disk containing $A,B \in \mathbb{T}^2$. We will fix this choice as shown on the Figure \ref{pic:dipolar}.
\end{remark}

\begin{figure}
\begin{center}
\includegraphics*[scale=0.7]{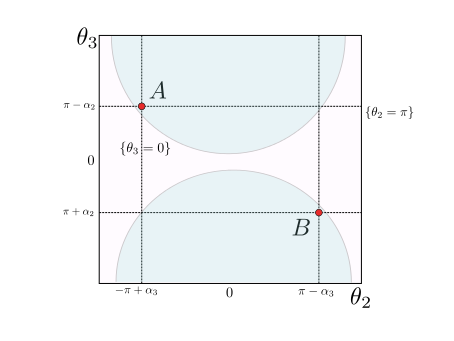}
\caption[]{The torus $\mathbb{T}^2$ of positions $(\theta_2, \theta_3)$ of a swiveling arm of type $(l_1, l_2, l_3)$ for the movement in the vector field \eqref{eq:vfield} with $\omega_1=0$. Points $A,B \in \mathbb{T}^2$ correspond to the positions when the arm forms a triangle. Here $\alpha_2, \alpha_3$ are the corresponding angles of this triangle. A choice of a disk containing the points $A,B$ fixes a dipolar form $\beta_{\sing}$ on $\mathbb{T}^2$ with two logarithmic singularities.}
\smallskip\label{pic:dipolar}
\end{center}
\end{figure}

Then we have a following

\begin{lemma}\label{lemma:decomposition}
Consider the dynamics of a swiveling arm of type $(l_1, l_2, l_3)$ with $l_j, j=1,2,3$ satisfying all three strict triangle inequalities, in a vector field \eqref{eq:vfield} with $\omega_1=0$.
Let $A,B \in \mathbb{T}^2$ be as in \eqref{eq:ABcoordinates} and let us fix a choice of a dipolar form $\beta_{\sing}$ (depending on a disc containing $A,B \in \mathbb{T}^2$) in $\mathbb{T}^2$ as defined above. Then there exists a unique form   
%
%
$\beta_{\mathrm{reg}} \in H^1(\mathbb{T}^2, \mathbb{R})$ with constant coefficients and a function $f \in C^{1}(\mathbb{T}^2)$ such that $\beta=\beta_{\mathrm{reg}}+\beta_{\mathrm{sing}}+df$. 
\end{lemma}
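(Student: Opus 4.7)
The plan is to show that $\beta$ is closed on $\mathbb{T}^2$ with only two isolated logarithmic singularities at $A$ and $B$, that the chosen dipolar form $\beta_{\sing}$ exactly matches these singularities, and that the smooth closed remainder on $\mathbb{T}^2$ has a unique constant-coefficient representative in its de Rham class. The decomposition then drops out.

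Away from $\Psi^{-1}(0) = \{A, B\}$, the form $d\arg z$ is closed on $\mathbb{C}^{*}$, so $\beta = \Psi^{*} d\arg z$ is closed on $\mathbb{T}^2 \setminus \{A, B\}$. By Proposition \ref{prop:submersion}, $\Psi$ is a local diffeomorphism at $A$ and at $B$. Pick a complex chart $w$ on $\mathbb{T}^2$ centered at $A$ in which $\Psi$ is $\mathbb{R}$-linear to leading order and identified with $w \mapsto w$ up to a real isomorphism of $\mathbb{C}$. In this chart $\beta = \varepsilon_A\, d\arg w + (\text{smooth})$, where $\varepsilon_A = \pm 1$ is the sign of the real Jacobian of $\Psi$ at $A$; similarly $\beta = \varepsilon_B\, d\arg w + (\text{smooth})$ near $B$.

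A direct computation of $d\Psi_{\bm{\theta}}(v_2, v_3) = i l_2 e^{i\theta_2} v_2 + i l_3 e^{i\theta_3} v_3$ gives real Jacobian determinant equal to $l_2 l_3 \sin(\theta_3 - \theta_2)$. At $A = (-\pi + \alpha_3, \pi - \alpha_2)$ one gets $\theta_3 - \theta_2 = \pi + \alpha_1$, so the determinant is $-l_2 l_3 \sin \alpha_1 < 0$; at $B = (\pi - \alpha_3, \pi + \alpha_2)$ one gets $\theta_3 - \theta_2 = \pi - \alpha_1$, so the determinant is $+l_2 l_3 \sin \alpha_1 > 0$. Hence $\varepsilon_A = -\varepsilon_B$, exactly the pattern of the dipolar form, which has opposite logarithmic residues at its two singular points $a,b$. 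With the choice of disk fixed in Figure \ref{pic:dipolar}, one arranges the correspondence $a \leftrightarrow A$, $b \leftrightarrow B$ so that the signs of the singular parts of $\beta$ and of $\beta_{\sing}$ coincide pointwise.

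Consequently $\gamma := \beta - \beta_{\sing}$ is closed on $\mathbb{T}^2 \setminus \{A, B\}$ and, in the chart $w$ above, equals $\varepsilon_A\, d\arg w - \varepsilon_A\, d\arg w + (\text{smooth}) = (\text{smooth})$ near each singular point. Therefore $\gamma$ extends to a smooth closed 1-form on $\mathbb{T}^2$. Since $H^1(\mathbb{T}^2, \mathbb{R}) \cong \mathbb{R}^2$ is spanned by $[d\theta_2]$ and $[d\theta_3]$, the class $[\gamma]$ has a unique constant-coefficient representative $\beta_{\reg} = \beta_2\, d\theta_2 + \beta_3\, d\theta_3$; by de Rham's theorem on the torus, $\gamma - \beta_{\reg} = df$ for some $f \in C^1(\mathbb{T}^2)$ (determined up to an additive constant). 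Uniqueness of $\beta_{\reg}$ is immediate since the map from constant-coefficient forms to $H^1(\mathbb{T}^2, \mathbb{R})$ is an isomorphism.

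The main obstacle is the sign bookkeeping in the third paragraph: one must correctly identify the local winding numbers of $\beta$ at $A$ and $B$ and verify that, with the chosen disk and orientation, they match the $(+1, -1)$ residues of the dipolar form. Once this alignment is verified, the extension across the two singular points and the subsequent de Rham decomposition are routine.
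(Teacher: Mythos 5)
Your proof follows essentially the same route as the paper's: identify the (opposite) local winding numbers of $\beta$ at $A$ and $B$, subtract the dipolar form to obtain a globally defined smooth closed $1$-form, and then use $H^1(\mathbb{T}^2,\mathbb{R})\cong\mathbb{R}^2$ to split off a unique constant-coefficient representative plus an exact part. Your explicit Jacobian computation $l_2 l_3\sin(\theta_3-\theta_2)$ makes precise the degree count that the paper only reads off the figure (your signs at $A$ and $B$ come out opposite to the paper's stated convention, but only the fact that the two residues are opposite is used in this lemma), so this is a valid, slightly more detailed rendering of the same argument.
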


\begin{remark}
Different choice of a circle containing $A,B$ would provoke a different form $\beta_{\sing}$, and hence, different form $\beta_{\reg}$. 
\end{remark}

\begin{proof}
First, $\delta:=\beta-\beta_{\mathrm{sing}}$ is a smooth $1$-form on the torus. Indeed, when a point $\bm{\theta} \in \mathbb{T}^2$ makes a loop around the point $A$ (respectively, $B$) on the torus, the argument of the end of the swiveling arm grows (or, respectively, diminishes) by $2 \pi$ exactly as a value of the dipolar form. This means that the points $A, B \in \mathbb{T}^2$ can't be the singularities of $\delta$ nor can be any other point. This form $\delta$ has its representative $\beta_{\mathrm{reg}}$ in a family of forms with constant coefficients since $H^1(\mathbb{T}^N, \mathbb{R}) \cong \mathbb{R}^N$. Hence $\delta-\beta_{\mathrm{reg}}$ is a differential of a smooth function.
\end{proof}

Now we are ready to give a new proof of Theorem \ref{thm:euclidian_3}.
\begin{proof}
Suppose that $\omega_1=0$. Then the Lagrange problem is equivalent to the study of the time average of the reduced Lagrange form $\beta$ along the orbits of the reduced vector field $X_{\red}:=\omega_2 \frac{\partial}{\partial d \theta_2}+\omega_3 \frac{\partial}{\partial d \theta_3}$. This time average by Lemma \ref{lemma:decomposition} is a sum of time averages for $\beta_{\sing}, \beta_{\reg}$ and $df, f \in C^1(\mathbb{T}^2)$. For the last one, the part $(1)$ of Lemma \ref{lemma:ergodic} gives that the time average of $df$ along the flow is equal to the space average which is zero by Stokes Theorem since $\partial \mathbb{T}^2=0$.

\textbf{Step 1. Calculate the time average of the regular part.} 

Following the part $(4)$ of Lemma \ref{lemma:decomposition} we see that the time average of $\beta_{\reg}=\beta_2 d \theta_2 + \beta_3 d \theta_3, \beta_2, \beta_3 \in \mathbb{R}$ is its evaluation on the reduced vector field $X_{\red}$. As already noticed in Lemma \ref{lemma:decomposition}, $\beta_{\reg}$ depends on a choice of a topological disk containing points $A$ and $B$ or, equivalently, on the choice of the homotopy path $\gamma$ connecting $A$ and $B$. The disk was fixed once and for all once we defined $\beta_{\sing}$, see Figure \ref{pic3}. Let us choose the generators of cohomology $H^1(\mathbb{T}^2, \mathbb{R})$ in such a way that they do not intersect this disk.

We choose these paths as shown on Figure \ref{pic3}: one of them is horizontal and another one is vertical. 

Geometrically, $\beta_2$ corresponds to the increment of $\arg z(t)$ when $\theta_3=0$ and $\theta_2$ makes one turn. In this case, the argument doesn't change because of triangle inequality, $|l_2|<|l_1|+|l_3|$ and the turning second vector will never get around $0$ if the first and the third one are pointing in one direction, see Figure \ref{pic:period1}. Analogously, $\beta_3=1$ because the argument changes by $2 \pi$ when the third interval is making one turn and the second is fixed, pointing in the direction $\theta_2=\pi$. Hence the time average of the regular part of Lagrange form is equal to $\left<\beta_{\mathrm{reg}} ,[\omega_2, \omega_3]\right>=\omega_3$.

\begin{figure}
\begin{center}
\includegraphics*[scale=0.9]{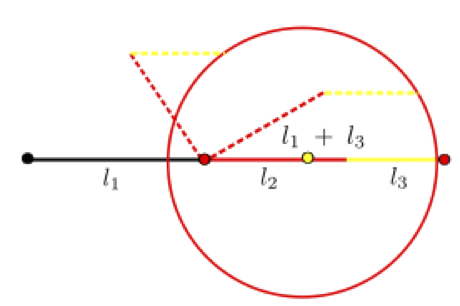}
\includegraphics*[scale=0.9]{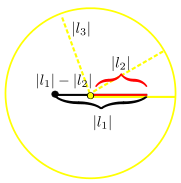}
\caption[]{For $\theta_1=0$ fixed, the movement on a torus $\mathbb{T}^2$ corresponding to the circle $\{\theta_3=0\}$ is described by the picture on the left. The second joint makes a circle movement: in this case the end of the system makes a circle movement as well, and this is a circle with the center $l_1+l_3$ and radius $l_2$. One can easily notice that this circle can't contain $0$ if the triangle inequality $l_2<l_1+l_3$ holds. The picture on the right describes the increment of the argument along the circle $\{\theta_2=\pi\}$: analogously, the end of the system moves along the circle with the center $l_1-l_2$ and radius $l_3$. In this case, on the contrary, this circle contains $0$. }\label{pic:period1}
\smallskip
\end{center}
\end{figure}


\begin{figure}
\begin{center}
\includegraphics*[scale=0.7]{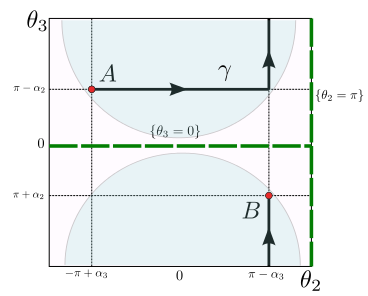}
\caption[]{One can choose a path $\gamma$ connecting the points $A$ and $B$ on the two-torus as shown on the picture. This path consists of one horizontal and one vertical part which correspond to the complete rotation of the second joint and then, to the complete rotation of the third joint to reach $B$ from  $A$. This path is contained in the disk that was chosen previously for the definition of the dipolar form $\beta_{\mathrm{sing}}$. Any path between $A$ and $B$ in this disk has the same homotopy type as $\gamma$. The flux of the vector field $X_{\red}$ through this path is equal to the evaluation of $\beta_{\sing}$ on $X$. The circles $\{\theta_3=0\}$ and $\{\theta_2=\pi\}$ are chosen as generators of $H^1(\mathbb{T}^2, \mathbb{R})$ that do not intersect $\gamma$ in order to define $\beta_{\reg}$ correctly.}\label{pic3}
\smallskip
\end{center}
\end{figure}

\bigskip

\textbf{Step 2. Calculate the time average of the dipolar part.}
Consider a path $\gamma$ connecting the points $A$ and $B$ that is chosen on the Figure \ref{pic3} and contained in the disk where the dipolar form is non-zero. Note that all the paths inside this disk joining $A$ and $B$ are homotopic (as paths with fixed extremities). The important observation is that the time average of the dipolar form is equal to the flux of the vector field $X$ through this path. The intuition behind this statement is that the argument of $\arg z(t)$ changes by $2 \pi$ (grows or diminishes in dependence of the direction) only if the trajectory $z(t)$ crosses the path between $A$ and $B$. A formal argument is the following.

Consider a rectangle which is obtained from $\gamma$ when pushing with $g^{\varepsilon}$, see Figure \ref{fig:pathdipolar}. The flux of the vector field is the area of this rectangle. We can apply the ergodic theorem to this rectangle (since its boundary has measure zero) to get that the flux of $X$ is equal to the time average of dipolar form almost everywhere. To get that the needed limit exists everywhere (and not almost everywhere), we use the fact that the linear flow on the torus is equicontinuous (and even more, it preserves distances). The points which are close to each other will meet the rectangle $R$ in close points (the exceptions exist but are very rare, see Figure \ref{fig:pathdipolar}).

\begin{figure}
\begin{center}
\includegraphics*[scale=0.7]{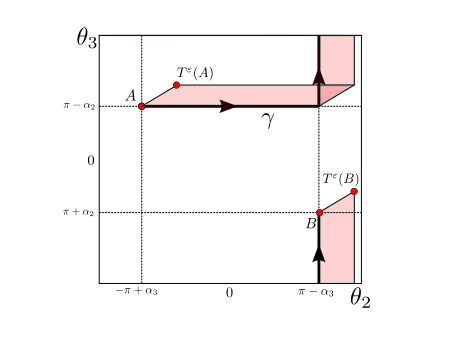}
\caption[]{The calculation of the flux of $X$ through $\gamma$ is equivalent to the calculation of the area of the rectangle $R$ defined as a set covered by the trajectories of the flow $g^t$. The points $A$ and $B$ are marked as well as their images by $g^{\varepsilon}$: $g^{\varepsilon}(A)$ and $g^{\varepsilon}(B)$. }\label{fig:pathdipolar}
\smallskip
\end{center}
\end{figure}

What is left is a calculation of the flux of the vector field $X=[\omega_2, \omega_3]$ through $\gamma$. On the first segment of the path when $\theta_3$ remains constant and equal to $\pi-\alpha_2$, the flux depends only on the vertical component of the field \eqref{eq:vfield}, $\omega_3$. The trajectories of $X$ are transverse to the path and intersect it from the left to the right, so the flux 
on this interval of the path is equal to $- \frac{2 \pi - 2 \alpha_3}{2 \pi } \omega_3$. Analogously, the flux through the vertical component of the path is equal to $\frac{2 \alpha_2}{2 \pi} \omega_2$.

We calculated the time average of the dipolar part. Let us note that the dipolar part and the regular part are intimately related.
An important remark about the calculation of the periods of a regular part of the form $\beta$ is the following. The numbers $\beta_2, \beta_3$ calculated above are the periods of the form $\beta_{\mathrm{reg}}$. To calculate them, we integrate this form on the paths in $\mathbb{T}^2$ which correspond to the first and second generator of cohomology $H^1(\mathbb{T}^2, \mathbb{R})$. What is important is that those paths are chosen in a way not to intersect the path $\gamma$ that is connecting the singularities. Only in this case the evaluation of a regular part will give us the correct quantity corresponding to the time average of the form $\beta-\beta_{\mathrm{sing}}$.
\bigskip

\textbf{Step 3. Sum them up.}
By adding up the evaluations of $\beta_{\reg}$ and $\beta_{\sing}$, we obtain: $\omega=\frac{\alpha_2}{\pi} \omega_2 + \frac{\alpha_3}{\pi} \omega_3$ in the case when $\omega_1=0$. By passing back to the system where $\omega_1 \neq 0$, see Lemma \ref{lemma:rotating}, we obtain the answer in the general case:
\begin{equation*} 
\omega=\omega_1+\frac{\alpha_2}{\pi} (\omega_2-\omega_1)+\frac{\alpha_3}{\pi} (\omega_3-\omega_1)=\sum_{j=1}^{3} \frac{\alpha_j}{\pi} \omega_j.
\end{equation*}
\end{proof}

\section{Non-zero curvature.}\label{sec:4}
Lagrange problem can be considered on any riemannian surface $S$ which is \emph{oriented} (in order to define the angular velocities and rotations) and \emph{complete} (in order to be able to connect the points on this surface by geodesic paths).

Indeed, let us fix some point $x_0 \in S$ and fix the lengths $l_j \in \mathbb{R}_+, \omega_j \in \mathbb{R}_+, j=1, \ldots, N$ and $\bm{\theta}^{(0)}=\left(\theta_1^{(0)}, \ldots, \theta_N^{(0)} \right) \in \mathbb{T}^N$. We will define the dynamics of a swiveling arm of type $(l_1, \ldots, l_N)$ based at $x_0$ under the flow of the vector field $\eqref{eq:vfield}$ given by $\omega_j$ with the initial condition defined by $\bm{\theta}^{(0)}$. Let us proceed as follows.
 
Choose an angle coordinate on the fiber of unitary tangent bundle $T^1_{x_0} \cong \mathbb{S}^1$. Consider a geodesic interval of length $l_1$ coming out from $x$ in the direction equal to  $\theta_1^{(0)}+ \omega_1 t$. Then in its endpoint $x_1$ the circle $T^1_{x_1} S$ has a privileged point (corresponding to the continuation of the movement along the geodesic). Then, one can define a geodesic interval of length $l_2$ emanating from $x_1 \in S$ in the direction equal to $\theta_2^{(0)}+  \omega_2 t$ counted from this privileged point and so on. The ending point $x_n$ of such a construction is called the end of the swiveling arm of type $(l_1, \ldots, l_N)$ on the riemannian surface $S$ at time $t$ under the flow of the vector field \eqref{eq:vfield}. This ending point defines a curve $z(t): \mathbb{R}_+ \rightarrow S$. See Figure \ref{pic:nonconstant}.

\begin{figure}
\begin{center}
\includegraphics*[scale=0.7]{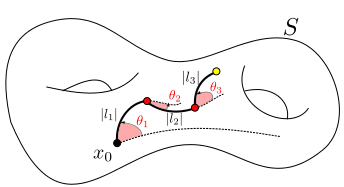}
\caption[]{A swiveling arm on the oriented compete surface $S$ of non-constant curvature.
}\label{pic:nonconstant}
\smallskip
\end{center}
\end{figure}

\begin{definition}
Suppose that there exists a complex chart on the surface $S$ such that the curve $\{z(t)| t \in \mathbb{R}\}$ is contained in a bounded ball $B(x_0,R)$. 
 \emph{The Lagrange problem on the oriented and complete surface} $S$ is a study of the existence of the limit \eqref{eq:angular} in this chart as well as its value as a function of $l_j \in \mathbb{C}, \omega_j \in \mathbb{R}$ and initial condition $\bm{\theta}^{(0)}$ .
\end{definition}

\begin{remark}
If the lengths $l_j, j=1, \ldots, N$ are small enough then an open chart (such that the corresponding complex structure is compatible with the metric, and hence the angles can be measured accordingly) in the definition of the Lagrange problem on $S$ exists.
\end{remark}

\subsection{Constant curvature Lagrange problem: redefining the angles.}\label{subsec:4.1}
Note that there is an important difference between the definition of the Lagrange problem on a general surface we have given above and the definition of Lagrange problem on the plane given in Subsection \ref{subsec:1.2}. Indeed, the plane has a specialty of having a globally defined horizontal direction and the angles $\theta_j$ for the Lagrange problem on the plane are measured with respect to this direction. Since on the general surface a choice of such a direction is impossible, the angle coordinates  $\theta_j$ of the swiveling arm are measured with respect to the positions of previous joints, see Figure \ref{pic:nonconstant}. For the euclidian plane these two sets of coordinates are related in an obvious way by a linear transformation.

\begin{proposition}\label{prop:new_angles}
Consider a swiveling arm on $\mathbb{R}^2$ with $N$ joints . Suppose that $(\theta^h_1, \ldots, \theta^h_N) \in \mathbb{T}^N$ are the angles that the joints make with the horizontal direction and  $(\theta_1, \ldots, \theta_N) \in \mathbb{T}^N$ are the angles that the joints make with the direction of the previous joint in the system. Then those two sets are related by a following linear relation:

\bigskip

\(
\begin{pmatrix}
\theta_1\\
\theta_2\\
\vdots \\
\vdots \\
\theta_N
\end{pmatrix}
= \begin{pmatrix} 
    1 & 0 & 0& \dots & 0 \\
    1 & -1 & 0& \dots & 0 \\
        \vdots & \vdots & \ddots & \ddots & \vdots \\
    0& 1&-1 & \ldots& 0  \\ 
    0& 0&  \ldots & 1 & -1   
    \end{pmatrix}
\begin{pmatrix}
\theta^h_1\\
\theta^h_2\\
\vdots \\
\vdots \\
\theta^h_N
\end{pmatrix}.
\)
\bigskip

Consequently, if one replaces the coordinates $\theta_j^h$ by the coordinates $\theta_j$, the resulting limit velocity in Theorem \ref{thm:euclidian_3} is equal to 
\begin{equation*}
\omega=\frac{\alpha_1}{\pi} \omega_1+\frac{\alpha_2}{\pi}(\omega_1+\omega_2)+\frac{\alpha_3}{\pi} (\omega_1+\omega_2+\omega_3)=
\omega_1+\omega_2 \frac{\alpha_2+\alpha_3}{\pi}+\omega_3\frac{\alpha_3}{\pi}.
\end{equation*}
\end{proposition}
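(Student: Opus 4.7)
The plan is to view the change of variables $\theta_j\leftrightarrow\theta_j^h$ as an automorphism of the torus $\mathbb{T}^N$ given by an integer bidiagonal matrix, and then to apply Theorem \ref{thm:euclidian_3} in the horizontal frame, substituting back at the end. Nothing new on the dynamical side should be needed; the content is entirely a linear change of variables.

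First, I would verify the matrix relation by direct geometric inspection. In the Euclidean plane, the ``continuation'' of the $(j-1)$-th joint at its endpoint is simply the horizontal ray of slope $\theta^h_{j-1}$; hence the angle $\theta_j$ of the $j$-th joint measured from this continuation equals, up to the chosen orientation convention on the tangent circle $T^1_x\mathbb{R}^2$, the difference $\theta^h_{j-1}-\theta^h_j$ (for $j\ge 2$), while by definition $\theta_1=\theta^h_1$. This reproduces the bidiagonal matrix displayed in the statement. Inverting it by back-substitution gives, in the sign convention that makes the final formula come out, $\theta^h_j=\sum_{k\le j}\theta_k$. Consequently, if $g^t$ is the linear flow on $\mathbb{T}^N$ with angular velocities $\omega_j$ in the relative coordinates, the conjugated flow in horizontal coordinates is again linear, with velocities $\omega^h_j=\sum_{k\le j}\omega_k$.

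Since the transformation matrix lies in $\mathrm{GL}_N(\mathbb{Z})$, the change of variables is a volume-preserving diffeomorphism of $\mathbb{T}^N$, and rational independence of $(\omega_j)$ is equivalent to rational independence of $(\omega^h_j)$. The ergodicity hypothesis of Theorem \ref{thm:euclidian_3} therefore transfers verbatim. Applied in the horizontal frame for $N=3$, that theorem yields $\omega=\tfrac{\alpha_1}{\pi}\omega^h_1+\tfrac{\alpha_2}{\pi}\omega^h_2+\tfrac{\alpha_3}{\pi}\omega^h_3$. Substituting $\omega^h_j=\sum_{k\le j}\omega_k$ and regrouping using $\alpha_1+\alpha_2+\alpha_3=\pi$ gives
\[
\omega=\frac{\alpha_1}{\pi}\omega_1+\frac{\alpha_2}{\pi}(\omega_1+\omega_2)+\frac{\alpha_3}{\pi}(\omega_1+\omega_2+\omega_3)=\omega_1+\omega_2\,\frac{\alpha_2+\alpha_3}{\pi}+\omega_3\,\frac{\alpha_3}{\pi},
\]
which is the claimed formula.

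The only delicate point, and really just bookkeeping, is sign discipline: the convention in the bidiagonal matrix, the convention for measuring $\theta_j$ (counterclockwise from the continuation of the previous joint versus clockwise), and the convention for the interior triangle angles $\alpha_j$ must all be made consistent so that the cumulative sum $\omega^h_j=\sum_{k\le j}\omega_k$ appears with the correct sign. Once this is fixed, no additional analytic or ergodic input is required beyond Theorem \ref{thm:euclidian_3} itself, since the coordinate change is a torus automorphism and the target formula is linear in the horizontal velocities.
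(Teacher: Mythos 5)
Your proof is correct and takes essentially the same approach as the paper, whose own proof is simply ``Straightforward, see Figure \ref{pic:newangles}'': read off the angle relation geometrically, invert the bidiagonal matrix to get $\omega^h_j=\sum_{k\le j}\omega_k$, and substitute into Theorem \ref{thm:euclidian_3} using $\alpha_1+\alpha_2+\alpha_3=\pi$. The sign issue you flag is real but lies in the statement itself, not in your argument: the matrix as displayed gives $\theta^h_j=\theta_1-\sum_{k=2}^{j}\theta_k$, so the stated final formula is only consistent with the opposite orientation convention $\theta_j=\theta^h_j-\theta^h_{j-1}$.
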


\begin{proof}
Straightforward, see Figure \ref{pic:newangles}.
\end{proof}

\begin{figure}
\centering
\includegraphics*[scale=0.8]{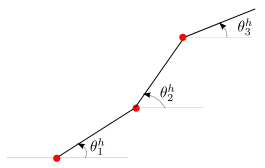}
\includegraphics*[scale=0.8]{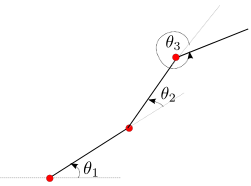}
\caption[]{Different ways to define the coordinates $\theta_j \in \mathbb{S}^1, j=1, \ldots, N$: on the left, with respect to the common horizontal direction, and on the right, with respect to the previous joint.}\label{pic:newangles}
\smallskip
\end{figure}

\begin{theorem}\label{thm:constant_curvature}
Consider a Lagrange problem on a constant curvature surface $S$ which is either the sphere $\mathbb{S}^2$ of radius $1$ or the hyperbolic plane $\mathbb{H}^2$ for $N=3$. For a swiveling arm with $N=3$ joints of type $(l_1, l_2, l_3)$ based at a point $x_0 \in S$ and the flow of vector field $\eqref{eq:vfield}$  suppose the following:
\begin{enumerate}
\item $l_j, j=1,2,3$ satisfy all three strict triangle inequalities, 
\item $\omega_1, \omega_2, \omega_3 \in \mathbb{R}_+$ are rationally independent.
\end{enumerate} 
In the case $S=\mathbb{S}^2$ suppose in addition that $\sum_{j=1}^3{l_j}<\pi $. 

Then, there exists a triangle $\Delta$ on $S$ with the lengths of sides equal to $l_j, j=1,2,3$. Denote its angles correspondingly $\alpha_j, j=1,2,3$ (its angles are uniquely defined by the lengths of its sides). Then, the asymptotic angular velocity $\omega$ exists and is equal to 
\begin{equation}\label{eq:thesame}
\omega=\omega_1+\frac{\pi-\alpha_1}{\pi}\omega_2 +\frac{\alpha_3}{\pi} \omega_3.
\end{equation}
\end{theorem}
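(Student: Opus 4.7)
The plan is to extend the cohomological argument of Section \ref{sec:3} with minimal changes, since it was designed precisely to bypass any use of the flat structure of $\mathbb{C}$. Both Lemma \ref{lemma:ergodic} and Lemma \ref{lemma:decomposition} rely only on the topology of the configuration torus and on the local (logarithmic) structure of the Lagrange form $\beta = \Psi^* d\arg z$ at points where the swiveling arm closes up. A conformal chart around $x_0$ on $\mathbb{S}^2$ or $\mathbb{H}^2$ supplies a well-defined argument, so $\beta$ is set up exactly as in the plane; only the geometric data feeding into the final evaluation changes.

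First I would reduce to the case $\omega_1 = 0$ by the analog of Lemma \ref{lemma:rotating}: in the ``new'' coordinates of Proposition \ref{prop:new_angles}, a global isometric rotation of the picture around $x_0$ only shifts $\theta_1$ and adds $\omega_1 t$ to the argument of $z(t)$, leaving $\omega_2, \omega_3$ untouched, so that $\omega = \omega_1 + \omega|_{\omega_1=0}$. Next I would identify $\Psi^{-1}(x_0) \subset \mathbb{T}^2$. By the spherical or hyperbolic law of cosines, together with the triangle inequalities and the extra condition $\sum l_j < \pi$ on $\mathbb{S}^2$, the sides $l_1,l_2,l_3$ determine a unique triangle $\Delta$ on $S$ up to isometry and orientation, so $\Psi^{-1}(x_0) = \{A,B\}$ corresponds to the two orientations of $\Delta$. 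Reading off the exterior turning angles at each vertex, the new-angle coordinates are $A = (\pi - \alpha_3, \pi - \alpha_1)$ and $B = -A$, where $\alpha_j$ is the interior angle of $\Delta$ opposite to $l_j$.

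I would then apply Lemma \ref{lemma:decomposition} (whose proof is purely topological and carries over unchanged) to split $\beta = \beta_{\reg} + \beta_{\sing} + df$ relative to a disc $D \subset \mathbb{T}^2$ containing $A$ and $B$. Lemma \ref{lemma:ergodic} decomposes the time average of $\beta$ along $X_{\red} = \omega_2\,\partial_{\theta_2} + \omega_3\,\partial_{\theta_3}$ into three pieces. The $df$-piece vanishes by Stokes; the dipolar part $\beta_{\sing}$ contributes the flux of $X_{\red}$ through a path $\gamma \subset D$ from $A$ to $B$, a completely elementary computation on the flat torus $\mathbb{T}^2$ whose answer depends only on the components of $B-A$, namely $\pm 2(\pi-\alpha_3)$ and $\pm 2(\pi-\alpha_1)$; and the regular part contributes $\beta_2\omega_2 + \beta_3\omega_3$, where $\beta_2,\beta_3$ are the periods of $\beta$ along two generators of $H^1(\mathbb{T}^2,\mathbb{R})$ chosen disjoint from $D$. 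These periods are integer winding numbers of the arm's endpoint around $x_0$, to be computed by fixing one coordinate so that the remaining sub-arm becomes rigid and then invoking the triangle inequality, verbatim as in Subsection \ref{subsec:3.2}.

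The main obstacle I expect is the bookkeeping that must produce the asymmetric formula \eqref{eq:thesame} with coefficient $\frac{\pi - \alpha_1}{\pi}$ rather than the Euclidean $\frac{\alpha_2 + \alpha_3}{\pi}$ of Proposition \ref{prop:new_angles}. Since $\beta_2,\beta_3$ are integer valued they cannot absorb the curvature defect $(\pi - \alpha_1) - (\alpha_2 + \alpha_3)$; this defect must enter exclusively through the coordinates of $A$ and $B$, which sit at the exterior turning angles $\pi - \alpha_1$ and $\pi - \alpha_3$ computed via the curved law of cosines. Thus the curvature manifests in the dipolar flux and nowhere else, while the regular part contributes the same integer multiples of $\omega_2$ and $\omega_3$ as in the Euclidean case. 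Checking that a single consistent orientation convention for the new angles, for the path $\gamma$, and for the cohomology generators yields exactly $\frac{\pi - \alpha_1}{\pi}\omega_2 + \frac{\alpha_3}{\pi}\omega_3$ is the most delicate part of the verification.
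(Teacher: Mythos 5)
Your proposal is correct and follows essentially the same route as the paper's own proof: reduce to $\omega_1=0$ by rotational symmetry, pass to the angles measured relative to the previous joint, locate the two singular points of the Lagrange form at the exterior-angle coordinates $\pm(\pi-\alpha_3,\pi-\alpha_1)$ determined by the curved law of cosines, and evaluate the dipolar flux plus the (integer, period-one) regular part exactly as in Subsection \ref{subsec:3.2}. Your closing observation that the curvature defect enters only through the positions of $A$ and $B$, while the regular periods stay at $1$, is precisely how the paper obtains \eqref{eq:thesame}.
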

\begin{remark}
The result of this Theorem can be rewritten in the terms of the area $A$ of the triangle $\Delta$. Indeed, the formula \eqref{eq:thesame} is equivalent to 
\begin{equation}\label{eq:thesame_area}
\omega=\omega_1+\frac{\alpha_2+\alpha_3 \pm A}{\pi}\omega_2 +\frac{\alpha_3}{\pi} \omega_3.
\end{equation}
for the hyperbolic ($+A$) and spherical ($-A$) cases. 

Note that the answer given by \eqref{eq:thesame} is a general answer for all constant curvature geometries, 
see
Proposition \ref{prop:new_angles} for the euclidian case. 
\end{remark}

\begin{proof}
We can suppose that $\omega_1=0$ since the argument of Lemma \ref{lemma:rotating} still works for spherical and hyperbolic geometry in which Lagrange problem has rotational symmetry. Consider a movement of the swiveling arm in the reduced vector field $\omega_2 \frac{\partial}{\partial \theta_2}+ \omega_3 \frac{\partial}{\partial \theta_3}$. Here $\theta_j$ are the new coordinates defined in the beginning of this Section corresponding to the angles between the direction of a joint number $j$ and the direction of a previous joint in a swiveling arm. 

Here we will simply repeat the proof of Theorem \ref{thm:euclidian_3} from Subsection \ref{subsec:3.2} modulo some minor changes. All the notions are defined analogously: Lagrangian $1$-form $\beta$, its regular and singular (dipolar) parts, $\beta_{\reg}$ and $\beta_{\sing}$. The only difference is that the coordinates $\theta_j, j=2,3$ on the torus $\mathbb{T}^2$ are not the same as before (see Figure \ref{pic:newangles}) so one has to recalculate the evaluations of $\beta_{\sing}$ and $\beta_{\reg}$ but the geometrical essence of the argument doesn't change. 

Note that if the lengths of the joints verify three strict triangle inequalities and if, in the case $S=\mathbb{S}^2$, the sum of the lengths is smaller than the distance between the north and south poles, there exists a triangle with the sides of lengths $l_j$, uniquely defined up to isometry. We denote $\alpha_j$ its angles. Then, the coordinates of singularities of $\beta$ change : we replace the Figure \ref{pic2} by the Figure \ref{pic:lobachevsky}. One can see that now the singularities have the following coordinates: $A(-\pi+\alpha_3, -\pi+\alpha_1)$ and $B(\pi-\alpha_3, \pi-\alpha_1)$. A path $\gamma$ from $A$ to $B$ is chosen in a way shown on the Figure \ref{pic:path_lobachevsky} (analogue of Figure \ref{pic3}).
%

Then, the evaluation of Lagrange $1$-form is a sum of the evaluations of singular and regular parts, the evaluation of a singular part will give
%
\begin{equation}\label{eq:evaluation_of_a_singular_part}
-\frac{2 \pi - 2 \alpha_3}{2 \pi} \omega_3 - \frac{2 \alpha_1}{2 \pi} \omega_2.
\end{equation}

The regular part with constant coefficients can be written as $\beta_{\mathrm{reg}}=\beta_2 d \tilde{\theta}_2 + \beta_3 d \tilde{\theta}_3$ and by calculating its periods, one obtains $\beta_2=\beta_3=1$.

By adding the evaluations of $\beta_{\reg}, \beta_{\sing}$ and $\omega_1$ (which signifies the returning back to the initial system where the first joint turns), one gets the final answer.
%
\begin{figure}
\centering
\includegraphics*[scale=0.7]{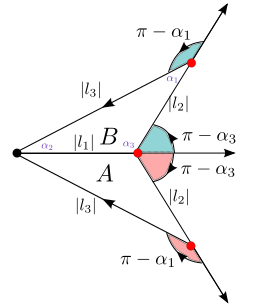}
\caption[]{
Two positions corresponding to the singular points $A,B$ of the dipolar form $\beta_{\mathrm{sing}}$ (and, accordingly, Lagrange form $\beta$) on $S=\mathbb{H}^2$ (or $S=\mathbb{S}^2$). These positions correspond to a swiveling arm that closes up into a triangle with the sides of lengths $l_j, j=1,2,3$ and the angles of values $\alpha_j, j=1,2,3$. This permits to calculate the coordinates of $A, B$ which are defined as angles between the present direction of the joint and the positive direction of the previous joint. We suppose that the coordinate is growing when the angle changes counterclockwise.} \label{pic:lobachevsky}
\smallskip
\end{figure}

\begin{figure}
\centering
\includegraphics*[scale=0.7]{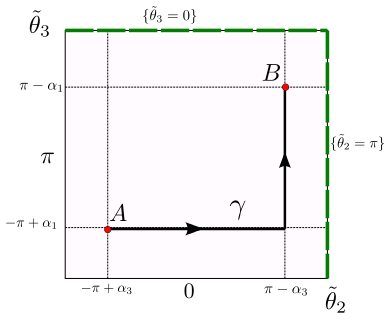}
\caption[]{
A path of integration $\gamma$ for a singular part $\beta_{\sing}$ of Lagrange form. The green paths $\left\{
\tilde{\theta}_3=0 \right\}$ and $\tilde{\theta}_2=\pi$ are useful for the calculation of the periods of the regular part $\beta_{\reg}$ of Lagrangian $1$-form.
} \label{pic:path_lobachevsky}
\smallskip
\end{figure}
\end{proof}

\subsection{Non-constant curvature: kite property.}\label{subsec:4.2}
In this Subsection we will solve the Lagrange problem  on a \emph{non-constant} curvature surface $S$
for a swiveling arm with $3$ joints based at some point $x_0 \in S$ .

The two main obstructions for the argument that we elaborated for the constant curvature case are the following:
\begin{enumerate}
\item The geometry on the arbitrary riemmanian surface $S$ is not isotropic : for a fixed base point $x_0$ the geometry in different directions in $T^1_{x_0} S$ varies. This means that it won't be possible to restrict ourselves to the case $\omega_1=0$ since the Lagrange problem doesn't have a rotational symmetry.
\item For three positive numbers $l_1, l_2, l_3$ that satisfy all three of the strict triangle inequalities there is no guaranty that the triangles with such lengths of sides are all isometric, and hence, have the same angles. And, moreover, if one fixes a position $I \subset S$, $x_0$ of a first joint on the surface $S$, one doesn't guaranty that there are only two positions of a swiveling arm that closes up in a triangle with one of the sides coinciding with $I$ as on Figures  \ref{pic2} and \ref{pic:lobachevsky}.

\end{enumerate}

We were able to overcome the first obstruction by considering the Lagrange form as a form on $\mathbb{T}^3$ and not on $\mathbb{T}^2$ as before. The second one is much trickier and we restrict ourselves to the case when it doesn't cause any problems: in the case when the lengths of the joints are small enough.

\begin{definition}[Kite property for the oriented and complete surface $S$.]
Fix a triple of three positive numbers $(l_1, l_2, l_3) \in \mathbb{R}_+^3$, verifying all of three strict triangle inequalities. Consider an orientable complete riemannian surface $S$ with a point $x_0 \in S$ on it. The surface $S$ verifies a \emph{kite property} in the point $x_0 \in S$ for the triple $(l_1, l_2, l_3)$ if for any direction $\varphi \in T_{x_0}^1 S$ there exist two triangles $\Delta^+$ and $\Delta^-$ on $S$ with the sides of lengths $l_1, l_2, l_3$ such that 
\begin{enumerate}
\item[•] $\Delta^+$ and $\Delta^-$ have a common vertex in $x_0$ 
\item[•] For both $\Delta^+$ and $\Delta^{-}$, the side of length $l_2$ doesn't contain $x_0$
\item[•] $\Delta^+ \cap \Delta^-$ is a segment on the surface of length $l_1$ coinciding with one of the sides of both triangles and this segment lies on a geodesic $\gamma$ going out from $x_0$ in the direction $\varphi$ : $\gamma(0)=0, \dot{\gamma}(0)=\varphi$.

\item[•] The couple of triangles $(\Delta^+, \Delta^-)$ is a unique couple with the properties listed above.
\end{enumerate}
\end{definition}

We fix the notations by saying that $\Delta^+$ ($\Delta^-$, correspondingly) is a triangle which is lying on the left (on the right) from the geodesic associated to $(x_0, \varphi) \in T^1 S$.

\begin{figure}
\centering
\includegraphics*[scale=0.7]{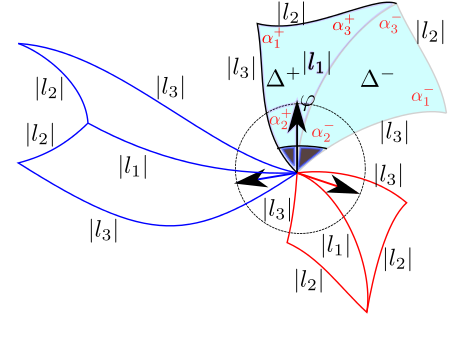}
\caption[]{Fix a direction $\varphi \in T^1_{x_0} S$ and consider a geodesic ray emanating from $x_0 \in S$ in this direction. By the $(l_1, l_2, l_3))$-kite property of the surface $S$ in the point $x_0$, one can exhibit two triangles $\Delta^+(\varphi)$ and $\Delta^-(\varphi)$ glued one to each other along the side of length $l_1$, forming a kite. These kites change their forms while $\varphi$ varies in $T^1_{x_0}$. For example, their angles depend on $\varphi$ as well. }\label{pic:1234567}
\smallskip
\end{figure}

\begin{remark}
For the proofs of Theorem \ref{thm:euclidian_3} and Theorem \ref{thm:constant_curvature} we use nothing more than a kite property for $S= \mathbb{R}^2, \mathbb{H}^2$ or $\mathbb{S}^2$. On a general surface $S$ there is no hope for the kite property to hold for any triple of lengths.
\end{remark}

\begin{proposition}\label{prop:kiteproperty}
Fix a complete oriented riemannian surface $S$. Then there exists some constant $C(S)>0$ such that $\forall x_0 \in S$ the kite property for swiveling arms based at $x_0$ holds for all triples $l=(l_1, l_2, l_3) \in \mathbb{R}_+^3$ such that their lengths are small enough, $|l|_{\infty}=\max_{j} l_j \leq C(S)$.
\end{proposition}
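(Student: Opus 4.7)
The plan is to reduce the kite property to a transverse-intersection statement for two geodesic circles, and then verify this in the almost-Euclidean regime provided by normal coordinates.

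Fix $x_0 \in S$ and a direction $\varphi \in T^1_{x_0}S$, and let $y = \exp_{x_0}(l_1 \varphi)$ be the endpoint of the side of length $l_1$. A triangle $\Delta$ with the required properties is entirely determined by its third vertex $z \in S$, which must satisfy $d(x_0, z) = l_3$ and $d(y, z) = l_2$. So the kite property amounts to the assertion that the two metric circles
\[
C_1 := \{ z \in S : d(x_0, z) = l_3 \}, \qquad C_2 := \{ z \in S : d(y, z) = l_2 \}
\]
meet in exactly two points, symmetrically placed on the two sides of the geodesic through $x_0$ and $y$. I would first work inside a normal coordinate ball $B(x_0, \rho)$ of radius $\rho$ smaller than the injectivity radius, where both $C_1$ and $C_2$ are smooth embedded circles (since $l_j \le C(S) < \rho$).

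The heart of the argument is a quantitative comparison with the Euclidean model. In normal coordinates the metric satisfies $g_{ij}(x) = \delta_{ij} + O(|x|^2)$, with the $O(|x|^2)$ error controlled by the sectional curvature, so the geodesic circle $C_k$ differs from the corresponding Euclidean circle by a perturbation of order $O(C(S)^3)$ in the $C^1$-topology. In the Euclidean model the strict triangle inequalities on $(l_1,l_2,l_3)$ are exactly the condition that the Euclidean circles of radii $l_3$ and $l_2$ around $x_0$ and $y$ intersect transversally in exactly two points, placed symmetrically on the two sides of the line $(x_0 y)$. A standard transversality / implicit function argument then shows that for $|l|_\infty$ small enough (depending on a lower bound on $|l_1+l_2-l_3|$, $|l_1+l_3-l_2|$, $|l_2+l_3-l_1|$, i.e. on the strictness of the triangle inequalities), the Riemannian circles $C_1$ and $C_2$ also intersect transversally in exactly two points $z^+(\varphi)$, $z^-(\varphi)$ lying on the two sides of the geodesic $\gamma$ determined by $(x_0,\varphi)$. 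One checks that the segment of $\gamma$ between $x_0$ and $y$ is the common side of length $l_1$ of $\Delta^+$ and $\Delta^-$, so the first three bullets of the kite property hold and uniqueness is exactly the "exactly two intersection points" statement.

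The only delicate point is uniformity of the constant $C(S)$ over $x_0 \in S$ and over $\varphi \in T^1_{x_0}S$. The $O(|x|^2)$ control on the metric in normal coordinates depends on a local bound on the sectional curvature $|K|$ and on the injectivity radius $\mathrm{inj}(x_0)$; likewise the transversality threshold depends on how close $(l_1,l_2,l_3)$ comes to degeneracy. To get a single $C(S) > 0$ valid for every $x_0$, one invokes the standing assumption that $S$ has bounded geometry (uniform bounds on $|K|$ and on $\mathrm{inj}$, automatic if $S$ is compact, and otherwise implicit in the context of the paper) and then simply takes $C(S)$ small enough that the Euclidean-to-Riemannian perturbation estimate above is valid uniformly. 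This uniformity step is where I expect the main technical obstacle to lie; everything else is a transversality argument in normal coordinates.
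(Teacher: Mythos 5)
Your reduction of the kite property to the statement that the two metric circles $C_1=\partial B(x_0,l_3)$ and $C_2=\partial B(y,l_2)$ meet in exactly two points, one on each side of the geodesic through $x_0$ and $y$, is exactly the reduction the paper makes. The gap is in how you establish the two-point intersection. Your transversality argument needs the Euclidean model circles to intersect with a quantitative lower bound on the angle of intersection, and you concede that your smallness threshold therefore depends on a lower bound for $l_1+l_2-l_3$, $l_1+l_3-l_2$ and $l_2+l_3-l_1$. But the Proposition asserts a single constant $C(S)$ valid for \emph{every} triple satisfying the strict triangle inequalities: for any fixed $C$ there are admissible triples with $|l|_\infty\le C$ arbitrarily close to degeneracy, for which the Euclidean circles are nearly tangent and cross at an arbitrarily small angle. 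For such triples the curvature-induced perturbation of size $O(|l|_\infty^3)$ is not dominated by the transversality defect, and the implicit function theorem gives no conclusion: a priori the perturbed circles could be disjoint, tangent, or meet in more than two points. So your argument, as written, proves a weaker statement in which the threshold depends on the triple and not only on $S$.

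The paper avoids this with a soft convexity argument that is uniform over all strict triples: take $C(S)$ below the convexity radius (so that $B(x_0,l_3)$ and $B(y,l_2)$ are strictly convex and lie in a totally normal neighbourhood), and consider $f=d(y,\cdot)$ restricted to $C_1$. By the Gauss lemma the only critical points of $f|_{C_1}$ are the two points $\gamma(l_3)$ and $\gamma(-l_3)$ where $C_1$ meets the geodesic, with values $|l_1-l_3|<l_2$ and $l_1+l_3>l_2$ by the strict triangle inequalities; hence $f$ is monotone on each of the two arcs of $C_1$ and takes the value $l_2$ exactly once on each, giving exactly one triangle on each side of $\gamma$ with no quantitative non-degeneracy required. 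Your separate observation that uniformity of $C(S)$ over $x_0$ requires bounded geometry when $S$ is non-compact is a fair criticism of the statement as written (Theorem \ref{thm:non-constant_curvature} indeed retreats to a constant $C(S,x_0)$), but it does not repair the main gap above.
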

\begin{proof}
This follows from the convexity of small discs: there exists a uniform constant $C(S)>0$ such that all the disks of radii smaller than $C(S)$ are strictly convex, \cite{Sakai}.
Take a triple $(l_1, l_2, l_3)$ in such a way that $|l|_{\infty} \leq C$. Let us fix $\varphi \in T^1_{x_0} S$ and construct a unique geodesic $\gamma$ from the Definition of kite property: $\gamma(0)=x_0, \dot{\gamma}=\varphi$. Let $x_1:=\gamma(l_1)$. Consider two disks : $B(x_0, l_3)$ and $B(x_1, l_2)$. By convexity, they will intersect in exactly two points, see Figure \ref{pic:123123}.

\end{proof}

\begin{figure}
\centering
\includegraphics*[scale=0.7]{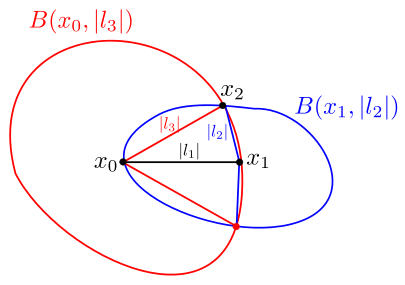}
\caption[]{There are two points in the intersection of two circles which are the boundaries of convex balls $B(x_0, l_3)$ and $B(x_1, l_2)$. These two points correspond to the positions of a swiveling arm that closes up into a triangle with the of lengths $l_1, l_2, l_3$.}\label{pic:123123}
\smallskip
\end{figure}


\begin{theorem}\label{thm:non-constant_curvature}
Consider the Lagrange problem on an arbitrary oriented and complete riemmanian surface $S$ for a swiveling arm with $N=3$ joints of type $(l_1, l_2, l_3)$ based a some point $x_0 \in S$ and the flow of vector field $X=\sum_{j=1}^3 \omega_j \frac{\partial}{\partial \theta_j}$, see $\eqref{eq:vfield}$. Suppose the following:
\begin{enumerate}
\item $l_j, j=1,2,3$ satisfy all three strict triangle inequalities 
\item $|l|_{\infty}:=\max_j (l_j) \leq C(S, x_0)$ where $C$ is a constant from Proposition \ref{prop:kiteproperty}
\item $\omega_1, \omega_2, \omega_3 \in \mathbb{R}_+$ are rationally independent.
\end{enumerate} 

Then, for any $\varphi \in T_{x_0}^1 S$ there exist the triangles $\Delta^+(\varphi)$ and $\Delta^-(\varphi)$ with the properties described above in the definition of kite property. 
Denote the angles of these triangles correspondingly $\alpha^{\pm}_1(\varphi), \alpha^{\pm}_2(\varphi), \alpha^{\pm}_3(\varphi)$.

Then, the asymptotic angular velocity $\omega$ exists and is equal to 
\begin{equation*}
\omega=
\omega_1+\omega_2  \frac{\pi - \bar{\alpha}_1}{\pi} + \omega_3 \frac{\bar{\alpha}_3}{\pi},
\end{equation*}
\begin{equation*}
\textit{where}  \;\; \bar{\alpha}_j=\frac{\bar{\alpha}_j^{+}+\bar{\alpha}_j^{-}}{2} \; \; \textit{and} \; 
\bar{\alpha}_j^{\pm}=\frac{1}{2 \pi} \int_{T_{x_0}^1 S} \alpha_j^{\pm} (\varphi) \; d \varphi , j=1,2,3.
\end{equation*}
Here $\bar{\alpha}_j^{\pm}$ are the average values of the absolute values of the angles in triangles with the sides of lengths $l_j$ in the kite property, see Pic. \ref{pic:1234567} with respect to the direction of the first interval. Here the parameter  $\varphi$ comes from the definition of a kite property.
\end{theorem}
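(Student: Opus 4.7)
The plan is to extend the geometric decomposition of Subsection \ref{subsec:3.2} directly to $\mathbb{T}^3$, since in the absence of rotational symmetry one cannot reduce to $\omega_1=0$, and then to perform a slice-and-average argument in the $\theta_1$-direction. Working in the new-angle coordinates of Subsection \ref{subsec:4.1}, I define the Lagrange form $\beta=\Psi^{*}d\arg z$ on $\mathbb{T}^{3}\setminus\Psi^{-1}(0)$ using a chart on $S$ around $x_0$ that contains the entire orbit (available by the smallness hypothesis). The kite property (Proposition \ref{prop:kiteproperty}) says that for each direction $\varphi\in T^{1}_{x_{0}}S$ there are exactly two arm configurations closing into triangles, namely $\Delta^{\pm}(\varphi)$; hence $\Psi^{-1}(0)$ is the disjoint union of two smooth closed curves $\Gamma^{\pm}\subset\mathbb{T}^{3}$, each a graph $\varphi\mapsto(\varphi,\theta_{2}^{\pm}(\varphi),\theta_{3}^{\pm}(\varphi))$ over the $\theta_{1}$-circle, with coordinates determined by the triangle angles $\alpha_{j}^{\pm}(\varphi)$ as in Figure \ref{pic:lobachevsky}.

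Following the analog of Lemma \ref{lemma:decomposition} in one dimension higher, I decompose $\beta=\beta_{\reg}+\beta_{\sing}+df$, where $\beta_{\reg}=\beta_{1}d\theta_{1}+\beta_{2}d\theta_{2}+\beta_{3}d\theta_{3}$ has constant coefficients, $\beta_{\sing}$ is supported in a small tubular neighborhood of $\Gamma^{+}\cup\Gamma^{-}$ with circulations $\pm 2\pi$ around the respective curves, and $df$ is exact. Since $\omega_{1},\omega_{2},\omega_{3}$ are rationally independent, the linear flow on $\mathbb{T}^{3}$ is uniquely ergodic; combining this with the smoothing argument from Section \ref{sec:2} (to handle the non-continuity of $\beta(X)$ near $\Gamma^{\pm}$) and Lemma \ref{lemma:ergodic} yields
\begin{equation*}
\omega=\int_{\mathbb{T}^{3}}\beta_{\reg}(X)\,d\mu+\int_{\mathbb{T}^{3}}\beta_{\sing}(X)\,d\mu,
\end{equation*}
the $df$ contribution vanishing by Stokes.

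For the regular part, I compute the periods $\beta_{j}$ on three loops avoiding $\Gamma^{\pm}$ that generate $H_{1}(\mathbb{T}^{3},\mathbb{Z})$. The strict convexity of the disk $B(x_{0},C(S))$ (Proposition \ref{prop:kiteproperty}) keeps the arm in a near-Euclidean chart, so the local winding analyses of Figure \ref{pic:period1} still apply: taking the $\theta_{1}$-loop (whole arm rigidly rotating around $x_{0}$), the $\theta_{2}$-loop at $\theta_{3}=0$, and the $\theta_{3}$-loop at $\theta_{2}=\pi$, the three strict triangle inequalities force $\beta_{1}=\beta_{2}=\beta_{3}=1$, so the regular contribution equals $\omega_{1}+\omega_{2}+\omega_{3}$. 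For the singular part, I slice $\mathbb{T}^{3}$ by $\theta_{1}=\varphi$: in each slice, the singular set is the two points $\Gamma^{\pm}(\varphi)$, and the geometric situation is identical to the constant-curvature analysis of Theorem \ref{thm:constant_curvature} with $\varphi$-dependent angles. I choose a 2-chain $\Sigma\subset\mathbb{T}^{3}$ with $\partial\Sigma=\Gamma^{+}-\Gamma^{-}$, fibered over $\theta_{1}$, whose slice $\Sigma\cap\{\theta_{1}=\varphi\}$ is the horizontal-then-vertical path of Figure \ref{pic:path_lobachevsky} connecting $\Gamma^{-}(\varphi)$ to $\Gamma^{+}(\varphi)$. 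The singular integral then equals the flux of $X$ through $\Sigma$, and by Fubini decomposes as the average over $\varphi\in S^{1}$ of slice-wise fluxes. Each slice flux is the 2D computation of Step 2 of the proof of Theorem \ref{thm:constant_curvature} with $\alpha_{j}^{\pm}(\varphi)$ in place of $\alpha_{j}$, giving per slice the expression $-\frac{\pi-\bar{\alpha}_{3}(\varphi)}{\pi}\omega_{3}-\frac{\bar{\alpha}_{1}(\varphi)}{\pi}\omega_{2}$ with $\bar{\alpha}_{j}(\varphi)=(\alpha_{j}^{+}(\varphi)+\alpha_{j}^{-}(\varphi))/2$ produced by the arithmetic mean of the two slice triangles. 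Averaging over $\varphi$ turns $\bar{\alpha}_{j}(\varphi)$ into $\bar{\alpha}_{j}=(\bar{\alpha}_{j}^{+}+\bar{\alpha}_{j}^{-})/2$, and summing with the regular contribution gives the claimed formula.

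The main obstacle is the Fubini step inside the singular-part integral: I must justify rigorously that $\int_{\mathbb{T}^{3}}\beta_{\sing}(X)\,d\mu$ decomposes into an average over $\varphi$ of two-dimensional slice fluxes. The cleanest route is to exploit the fact that $\Gamma^{\pm}$ are graphs over the $\theta_{1}$-circle (which is where the kite property is used essentially) and build $\beta_{\sing}$ with a product structure compatible with $\mathbb{T}^{3}=S^{1}\times\mathbb{T}^{2}$, so that its evaluation on $X$ separates cleanly; alternatively, one checks that the flux of $X$ through the ruled cylinder $\Sigma$ is invariant under the choice of filling and equals the $\varphi$-average of slice fluxes. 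A secondary difficulty is verifying $\beta_{1}=\beta_{2}=\beta_{3}=1$ in the absence of a global isotropic geometry, for which the smallness of $|l|_{\infty}$ and the strict convexity of $B(x_{0},C(S))$ furnish the necessary replacement for Euclidean rigidity.
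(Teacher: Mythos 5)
Your proposal is correct and follows essentially the same route as the paper: the Lagrange form is decomposed on $\mathbb{T}^3$ into a constant-coefficient regular part with periods $\beta_1=\beta_2=\beta_3=1$ plus a dipolar part whose singular set is the two closed curves given by the kite property, and the singular contribution is computed as the flux of $X$ through a cylinder fibered over the $\theta_1$-circle, sliced and averaged exactly as in the paper's proof. The Fubini/slicing step you flag as the main obstacle is handled in the paper by the same observation you make, namely that the singular curves are graphs over the $\theta_1$-circle so the $\theta_1$-component of $X$ contributes nothing and the flux reduces to the $\varphi$-average of the two-dimensional computation from Theorem \ref{thm:constant_curvature}.
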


\begin{proof}
The idea is to adjust the proofs from Subsections \ref{subsec:3.2} and \ref{subsec:4.1} that dealt with constant curvature to a non-constant curvature case. We will still consider the Lagrange $1$-form $\beta$ and its regular and singular parts $\beta_{\reg}$ and $\beta_{\sing}$ but they are now all $1$-forms on $\mathbb{T}^3$ and not $\mathbb{T}^2$.
The singular set $\mathcal{S}$ of $\beta_{\sing}$ (and, respectfully, $\beta$) has changed: for each plane $\Pi_{\varphi}:=\{\theta_1=\varphi \in T_{x_0} S\}$ the intersection of $\mathcal{S}$ with this plane consists of two points that correspond to the positions of a swiveling arm closing up into a triangle:
\begin{equation*}
\mathcal{S} \cap \Pi_{\varphi}=\left\{A(\varphi), B(\varphi)\right\}.
\end{equation*}
These points exist since the kite property holds, see Proposition \ref{prop:kiteproperty}, and their coordinates can be represented as
\begin{align*}
A(\varphi)=\left(-\pi+\alpha_3^{-}(\varphi), -\pi+\alpha_1^{-} (\varphi) \right),\\
B(\varphi)=\left(
\pi-\alpha_3^{+}(\varphi), \pi-\alpha_1^{+}(\varphi)
\right),
\end{align*}
by the same argument as in the proof of Theorem \ref{thm:constant_curvature}. Hence, for smal $|l|_{\infty}$ the singular set $\mathcal{S}$ is a union of two circles. The asymptotic velocity $\omega$ is given by the evaluation $\beta[X]$ which is the sum of two numbers: the evaluation of the regular part and that of the singular parts. The first one is a space integral and the second one is a flux through a surface with a boundary $\mathcal{S}$, i.e. a cylinder, see Figure \ref{fig:cylinder}. We will represent it as a union of paths with fixed $\theta_1$.

\begin{figure}
\centering
\includegraphics*[scale=0.7]{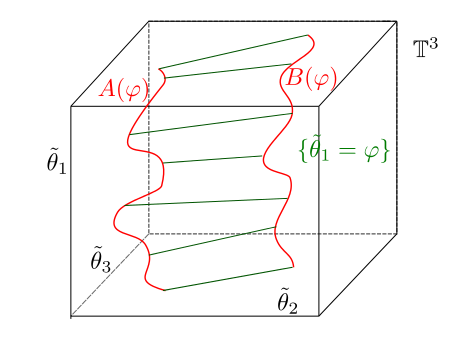}
\caption[]{This figure represents a fondamental domain of the torus $\mathbb{T}^3$ . The evaluation of $\beta_{\sing}$ is equal to the flux of $X$ through the surface of the cylinder foliated by intervals $\theta_1 = \mathrm{const}$.}\label{fig:cylinder}
\smallskip
\end{figure}

Fix $\theta_1=\varphi$. Since $\{A(\varphi) | \varphi \in T_{x_0}^1S\}$ and $\{B(\varphi) | \varphi \in T_{x_0}^1 S \}$ are two closed circles, the $\tilde{\theta}_1$-component of the vector field $X$ won't give any contribution to the evaluation of a singular part. Then, the flux is calculated exactly as in \ref{eq:evaluation_of_a_singular_part}. Taking into account that $\alpha^+(\varphi) \neq \alpha^-{\varphi}$ we have that
%
the evaluation of $\beta_{\sing}$ on the vector field $X$ for $\theta_1=\varphi$ gives
\begin{equation*}
-\frac{2 \pi - \alpha_3^+(\varphi)-\alpha_3^-(\varphi)}{2 \pi} \omega_3 - \frac{ \alpha_1^+(\varphi)+\alpha_1^-(\varphi)}{2 \pi} \omega_2.
\end{equation*}

By integration over $T_{x_0}^1 S$, we obtain

\begin{equation*}
\left<\beta_{\mathrm{sing}}, [X] \right>=
-\frac{\pi-\bar{\alpha}_3}{\pi}\omega_3-\frac{\bar{\alpha}_1}{\pi}\omega_2.
\end{equation*}

The evaluation of the regular part $\beta_{\reg}=\beta_1 d \tilde{\theta}_1 + \beta_2 d \tilde{\theta}_2 + \beta_3 d \tilde{\theta}_3, \beta_1, \beta_2, \beta_3 \in \mathbb{R}$ is given by its periods $\beta_1, \beta_2, \beta_3$ that we can calculate by integrating $\beta_{\reg}$ on three circles: correspondingly, $\{(\varphi, \pi, 0) | \varphi \in \mathbb{S}^1\}, 
\{(0, \varphi, 0) | \varphi \in \mathbb{S}^1\}$ and $\{(0, \pi, \varphi) | \varphi \in \mathbb{S}^1\}$.

Each one of these circles is disjoint from the cylinder of singularities, Moreover, there is a torus containing this cylinder disjoint from these three circles. This is clear for the two last paths since $\theta_1=\mathrm{const}$ and this follows from the $2$-dimensional pictures drawn before, see for example Figure \ref{pic:path_lobachevsky}. The first circle neither doesn't intersect the cylinder since this corresponds to a degenerate position that is never approached by continuous curves $\{A(\varphi)\}$ and $\{B(\varphi)\}, \varphi \in T_{x_0}^1 S$. One can easily see that in all of three cases, $\beta_j=1, j=1,2,3$ and hence $\left<\beta_{\mathrm{reg}},[X]\right>= \omega_1+\omega_2+\omega_3$. By summing up two contributions we get the final answer. 

\end{proof}

\end{document}